\newtheorem{theorem}{Theorem}
\newenvironment{proof}{\paragraph{Proof:}}{\hfill$\square$}
\title{Electric Vehicle Traveling Salesman Problem with Drone with Fixed-time-full-charge Policy}
\author{Tengkuo Zhu\footnote{Email: zhutengkuo@utexas.edu}}
 \affil{University of Texas at Austin, Texas.}
 \author{Stephen D. Boyles\footnote{Email: sboyles@mail.utexas.edu}}
 \affil{University of Texas at Austin, Texas}
 \author{Avinash Unnikrishnan\footnote{Email: uavinash@pdx.edu}}
 \affil{Portland State University, Portland, Oregon}
\begin{document}

\maketitle

\begin{abstract}
The idea of deploying electric vehicles and unmanned aerial vehicles (UAVs), also known as drones, to perform "last-mile" delivery in logistics operations has attracted increasing attention in the past few years. In this paper, we propose the electric vehicle traveling salesman problem with drone (EVTSPD), in which the electric vehicle (EV) and the drone perform delivery tasks coordinately while the electric vehicle may need to visit charging stations occasionally to recharge. We further assume that the EV can refresh its energy to full battery capacity with fixed time at charging stations. Thus, the proposed problem is termed EVTSPD-FF. In this paper, an arc-based mixed-integer programming model defined in a multigraph is presented for EVTSPD-FF. An exact branch-and-price (BP) algorithm and a variable neighborhood search heuristic are developed to solve instances with up to 25 customers in one minute. Numerical experiments show that the heuristic is much more efficient than solving the arc-based model using the ILOG CPLEX solver and BP algorithm. A real-world case study on the Austin network and the sensitivity analysis of different parameters are also conducted and presented. The results indicate that drone speed has a more significant effect on delivery time than the EV's driving range.  \par
\vspace{0.5cm} 
\noindent \emph{Key words}: Traveling salesman problem, Electric vehicle, Unmanned aerial vehicle, Transportation logistics

\end{abstract}

\section{Introduction}
Although e-commerce was growing fast before COVID-19 hit, the ongoing pandemic has accelerated the shift towards a more digital world and pushed more consumers worldwide online. Based on the United Nations conference on trade and development, the e-commerce sector saw a significant rise from 16 percent to 19 percent in 2020 in its share of all retail sales. According to the US commerce department, in the United States, online sales hit \$791.70 billion in 2020, up 32.4\% from \$598.02 billion in the prior year. With consumers increasingly turning to e-commerce for their shopping needs, efficient order fulfillment and parcel distribution is the expectation of every online shopping experience. As a result, businesses in this area have begun racing to develop new technologies and experimental supply chain models to increase parcel volume and expedite deliveries. \par

One of the most significant expenses and challenges in the item shipping process is same-day, last-mile delivery, where the goods are transported from a distribution hub to the customer's front door. The last mile of delivery cost accounts for about half of the total shipping costs, as the final leg of shipment typically involves multiple stops with low drop sizes, especially in urban areas. In recent years, a large variety of alternative last-mile transportation modes appeared in both academia and industry, such as electric vehicles \citep{lin2016electric}, autonomous robots \citep{jennings2019study} and unmanned aerial vehicles, or drones \citep{song2018persistent}. Some new delivery concepts are also proposed, such as crowdsourcing delivery \citep{rouges2014crowdsourcing}. The problem researched in this paper involves the operation of two new techniques mentioned above: electric vehicles and drones. \par

Meanwhile, political and practical considerations  in recent years, such as concern about greenhouse gas emissions, are driving the move away from internal combustion engines towards trucks and vans using alternative power sources. The deployment of a new generation of electric vehicles is critical for reducing greenhouse gas emissions. This concept has also been extended to logistics. The new invention of electric delivery trucks or vans is increasingly replacing the traditional delivery truck as an approach for logistics companies to "boost their bottom lines" while fighting climate change and urban pollution. UPS, Amazon, and DHL all announced their initiatives to partially or fully replace their current pickup and delivery fleet with battery-powered vehicles. \par

At the same time, auto manufacturers compete with start-ups to produce the most efficient and "smart" electric delivery vehicles. One of the latest inventions is small electric delivery vans, a picture of which is shown in Figure \ref{fig:small_ev_van}. It is an electric, lightweight van designed especially for small or medium-sized package delivery. Compared to the traditional bulky delivery truck, this small van is significantly more eco-friendly and cost-effective. This van is compact in dimensions (140-200 cm of width), can carry a load up to 1600-2000 kilograms, and has a maximum driving range of 60-200 km with a lithium battery. This small-sized and flexible electric vehicle might represent the future landscape of logistics.

\begin{figure}
    \centering
    \includegraphics[width = .4\textwidth]{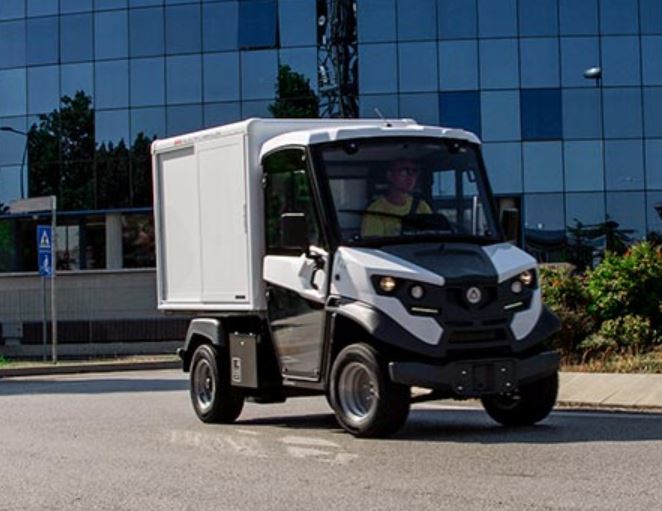}
    \caption{A small electric van created by Alkè, an Italy-based company} \label{fig:small_ev_van}
\end{figure}

Delivery via unmanned aerial vehicles, or drones, has also shown to have great potential to satisfy future customer expectations for delivery service quality as efficient, environmentally friendly, low maintenance cost delivery modes. The use of drones as a delivery method has been applied in several areas such as logistics, military operations, public security, traffic surveillance, monitoring, and humanitarian relief. In particular, the use of drones in logistics has sparked continuous interest for the general public ever since online retail giant Amazon announced its "Prime Air" initiative back in 2013. Multiple companies, such as Google, DHL, UPS, and Walmart, have revealed and tested drone delivery services in various countries. Compared to ground transportation delivery, drone delivery has the clear advantage of greater traveling speed, as it can travel directly from the launch point to the customer's location without being affected by the congested ground traffic. This property is crucial for several time-sensitive applications such as blood and medicine delivery. Besides, based on the results reported in \citep{goodchild2018delivery}, drones tend to have a $ CO_{2}$ emissions advantage over trucks in service zones that are either closer to the depot or have smaller numbers of customers. The past few years have witnessed a dramatic increase in UAV applications \citep{Dronezon}. According to Teal Group's prediction, commercial use of UAVs will grow eightfold over the decade to reach US \$7.3 billion in 2027 \citep{Teal}. For the rest of the paper, the term "UAV" and "drone" are used interchangeably.\par

However, these new techniques do not come without drawbacks. For electric vehicles, although their driving range has been dramatically improved recently due to the advancement in the power grid system and lithium battery, this value ranges typically from 150 miles to 300 miles. The driving range is significantly smaller than electric passenger cars for normal-sized electric trucks or vans for commercial use. For example, the expected driving range of electric delivery vans built by Rivian, an American electric vehicle automaker, is about 150 miles \citep{Insideevs}. The Mercedes-Benz eSprinter for Amazon has a maximum range of 104 miles with a 55 kWh battery and 70 miles with a 41 kWh battery \citep{alke}. As for small-sized electric vans, their driving range is about 60 miles to 190 miles with a 10-20 kWh lithium battery. Thus, for electric vans that can only cover less than 100 miles per charge, their success and fluent operation require them to recharge their battery en route when necessary. For drone delivery, apart from the limitation of related regulations of the Federal Aviation Administration and privacy issues in the urban area, its drawback mainly lies in a limited capacity and flight range. A rotary-winged drone, such as the one used in project "Amazon prime air," typically has a capacity of up to five pounds, with a maximum flight range of 15 miles. These values also indicate that a launched drone can only serve one customer before it is retrieved and recharged in a practical scenario. \par

Several new delivery concepts have been proposed in academia to mitigate the disadvantages of drone delivery, one of which is the in-tandem delivery method where the trucks are teamed up with drones to accomplish delivery tasks. Initially presented in \citep{Murray2015}, this idea requires that in operation, the truck carries a large number of parcels and charging facilities for the drones and serves as a drone hub while delivering parcels independently. The truck could launch the drone and retrieve the drone later before the drone's energy is depleted. This setting leads to a combinatorial optimization problem that requires the coordination of both truck and drone routes. \par

This paper describes a similar problem while replacing the traditional delivery truck with a small electric truck/van. This problem is called the electric vehicle traveling salesmen problem with drone (EVTSPD). A single truck/van equipped with a drone performs the delivery task in this problem. On its route, it needs to visit charging stations in the network to recharge its battery energy when necessary. Besides, we further assume that the truck/van shares its energy with its equipped drone. When the drone is launched from the truck/van to perform a delivery task, the energy required for drone delivery should be deducted from the remaining energy of the truck. It turns out that the addition of this assumption significantly increases the difficulty of the problem and needs specially constructed techniques to handle it. From a practical point of view, by combining two newly emerged techniques - EV and drone - we can gain some insight into the future landscape of logistics by extending current solution techniques to more complicated problem variants we may face later.

In general, the main contributions of this paper are:
\begin{itemize}
    \item The paper investigates the combination of EV and drone routing and propose the electric vehicle traveling salesman problem with drone. In EVTSPD, the proposed shared energy assumption ensures that the coordination of two vehicles should be considered throughout the operation. 
    \item In terms of the charging policy, this paper assumes that the EV could refresh its battery to full capacity with fixed amount of time every time it visits the charging station nodes. This proposed problem, EVTSPD with fixed-time-full-charge policy, is termed as EVTSPD-FF.
    \item A mixed integer/linear programming formulation of EVTSPD-FF is presented. This formulation is defined in a constructed multigraph, which contains no charging station nodes at the expense of an increased number of arcs. It is estimated that when three charging stations exist in the instance, the constructed multigraph has approximately 3-5 times more arcs compared to the original network.
    \item We present a set partitioning formulation for EVTSPD-FF and propose an exact branch-and-price (BP) solution method based on the multigraph. A dynamic programming (DP) approach is used to derive the least-cost route in the pricing problem of the branch-and-price algorithm, based on the ng-route relaxation of the problem. This approach proved to be efficient in identifying the columns which would enter the master problem of the BP algorithm.
    \item The numerical analysis results indicate that the BP method can solve instances containing up to 10 customers to proven optimality in one hour, which is significantly more efficient than solving the problem via a commercial solver.
    \item A variable neighborhood search (VNS) heuristic method is also proposed in this paper to solve EVTSPD-FF of practical size.
    \item Extensive computational runs are conducted to test the efficiency of the proposed MILP model, BP algorithm, and VNS heuristic. A case study based on a real-world Austin network is also performed, along with the sensitivity analysis of several critical parameters of EV and UAV.  
\end{itemize}

The rest of the paper is organized as follows. Section 2 provides a thorough literature review of the related and recent research on  EV routing and drone routing problems. Section 3 describes the EVTSPD-FF and its MILP formulation. Section 4 proposes the set partitioning formulation and the exact BP algorithm used to solve the problem. Specifically, it introduces the DP method used to solve the pricing problem of BP. Besides, a VNS method is also proposed in this section. Section 5 presents the computational results of the BP algorithm compared to MILP models, the performance evaluation of VNS, and a real-world case study. Finally, section 6 concludes the work and gives insight into potential future research directions.

\section{Literature Review}
The number of publications related to EVs or drones has grown significantly in recent years. A thorough and comprehensive on both areas are not the main subject of this section. Thus we only include recent papers that focus specifically on EV and drone routing. 

\subsection{EV-related routing}
The electric vehicle routing problem is a recent variant of traditional VRP. More details about VRP and its variants could be seen in \citep{braekers2016vehicle, montoya2015literature}. 
EVRP is a branch of the alternative-fuel powered vehicle routing problem where the conventional vehicles are replaced by alternative-fuel powered vehicles (AFVs), which have limited fuel tank capacity or battery. Thus, EVRP is also a branch of the green vehicle routing problem (GVRP). In fact, EVRP is called GVRP in the early papers of this topic, such as \citep{Erdogan2012}.  Readers can refer to \citep{asghari2020green} as the latest literature review on GVRP. For a review paper that specially focuses on the electric vehicle routing problem and its variants, readers can refer to \citep{erdelic2019survey}.\par

There are two basic configurations of EVs: the battery electric vehicle (BEV) and the hybrid electric vehicle (HEV). The former one is powered exclusively by batteries inside the vehicle, while the latter one can be powered by batteries and an internal combustion engine. Consequently, BEV has much less driving range compared to HEV. Similar to a majority of EV routing-related papers regarding BEV as EV, in this problem, we also assume that the electric van is BEV. For the rest of the paper, we assume that EV is referred to as BEV if not clearly stated otherwise. An EV could renew its battery energy at charging stations in two ways: by swapping empty batteries with fully charged ones or by plug-in charging via an electric power source. The battery swapping process could be finished within minutes, while the plug-in charging time depends on the desired state of charge level and charging function. \par

The early research on the routing of electric fleet includes \citep{gonccalves2011optimization, conrad2011recharging}. The former focuses on VRPPD with mixed feet of EVs and conventional internal combustion engine vehicles where the charging time is based on the total distance traveled, while the latter one assumes that vehicles are allowed to recharge at customer locations to recover up to 80\% of its charge capacity. The numerical analysis results of the latter paper imply that customer time windows have a greater impact on the tour distance recharging time is long. \citep{Erdogan2012} formally formulated GVRP in which a fleet of AFVs are involved. The AFVs can refresh their driving electricity at charging station nodes with fixed charging time. No customer time window and vehicle capacity are considered in the problem. The MILP formulation is proposed based on the augmented network. Two problem-specific heuristics are presented: one is based on the Clarke and Wright Savings algorithm, while the other one is based on the clustering algorithm. \citep{schneider2014electric} proposed a problem that considers both the service time window and vehicle capacity. This problem assumes a constant and linear charging rate at the charging station node, while the recharging time depends on the remaining energy level of the vehicle when arriving at charging station nodes. A hybrid variable
neighborhood search heuristic/tabu search method is used to solve the problem. In the previously discussed paper, the charging station nodes are copied multiple times to create the augmented network to enforce that each node could visit at most one time. A different approach is adopted in \citep{bruglieri2016new} where for each pair of customers, the charging stations are pre-computed to ensure that the vehicle can travel through the two customers via the computed charging stations. A new MILP model is proposed based on this network. However, the limitation of this approach is that only one charging station node can be traversed between two consecutive customer visits, which could be sub-optimal on some occasions. \par

All the above-mentioned papers use the battery swapping scheme and assume fixed charging time, which is not accurate in practice. In fact, the recharging of the battery employs a multi-phase charging scheme \citep{gao2002dynamic, tremblay2009experimental} to protect the battery from over-charging, which makes the function of charging time nonlinear. 
\citep{Montoya2017} extended the previous E-VRP models to consider nonlinear charging functions and used a piece-wise linear approximation to simplify the process. The numerical analysis in the paper indicates that neglecting nonlinear charging might lead to an infeasible route. A local search meta-heuristic is used to solve the problem.
\citep{froger2019improved} proposed two new formulations for nonlinear charging EVRP: one is the arc-based formulation, and another one is path-based formulation. The numerical analysis results show that the path-based model outperforms the one in \citep{Montoya2017}. \citep{kocc2019electric} extended the problem by considering shared charging stations. This new problem aims to minimize the total charging station construction cost and driving cost. A similar problem is proposed in \citep{kullman2021electric}, in which the author considers EVRP with public charging stations. In the public charging station, if the demand is full, the waiting time is modeled with queuing theory. A decomposition approach is used to derive the bounds of the problem. \par

With the complex constraints of EVRP, most of the papers use heuristics/meta-heuristics to obtain a solution. Exact methods for the EVRP and its variants are rather scarce. \citep{kocc2016green} presented a new formulation while assuming that no more than one refueling stop is made by any vehicle when traveling between two customers or the depot and one customer, which is similar to the approach used in \citep{bruglieri2016new}. A branch-and-cut algorithm is used to obtain the lower bounds, while a heuristic based on simulated annealing is adopted to obtain upper bounds. This solution method can solve instances containing up to 20 customers. \citep{desaulniers2016exact} adopts an exact branch-price-and-cut algorithm to solve four variants of EVRP. This algorithm uses customized monodirectional and bidirectional labeling algorithms for generating feasible vehicle routes and can solve instances with 100 customers and 21 recharging stations. \citep{hiermann2016electric} also reports an exact branch-and-price algorithm and adaptive large neighborhood search method for EVRP with time window and various fleet sizes. More recently, \citep{andelmin2017exact} proposed an exact branch-price-and-cut algorithm for solving EVRP with maximum duration constraints based on the creation of a multigraph. This approach avoids the limitation of the approach used in \citep{kocc2016green} and is shown to have excellent performance, which can solve instances with 110 customers to optimality. This multigraph approach is also used in this paper. \citep{Lee2021} presents an exact branch-and-price approach for the EVRP with nonlinear charging functions by introducing an extended charging stations network. This extended charging stations network is very similar to the refueling path network in this paper. However, in its branch-and-price scheme, the subproblem only obtains route segment and not complete routes, which is different from our approach.

\subsection{Drone-related routing}
The number of recent publications related to the application of drones is growing extremely fast. \citep{otto2018optimization} review several important and potential civil drone applications in the area of agriculture, monitoring, transport, security, etc. This section does not include a detailed review of the drone-related application, so we only consider the papers that involve drone routing-related optimization problems. A recent review on this subject can be seen in \citep{macrina2020drone}.\par

Several works emerged in recent years that focus on serving customers with a fleet that is composed of drones only. Most of these drone-only routing problems pay special attention to the energy consumption, limited capacity, and flight range of drones. Unlike the truck-drone in-tandem routing problem where a single drone is assumed to serve only one customer per trip, in several drone-only problems, it is normally assumed to be capable of serving multiple customers in one trip. \citep{dorling2016vehicle} proposes two models of drone-only VRP with different objectives.  The first model minimizes costs under the delivery time limit, while the second one minimizes the delivery time under budget constraints. A simulated annealing (SA) heuristic is used to solve the problems. \citep{coelho2017multi} introduced a new problem named the green UAV routing problem, where multiple charging station nodes are introduced. This model also has multiple objective functions, which include total travel distance/time, the number of drones used, and so on. A heuristic called the multi-objective smart pool search (MOSPOOLS) is used to obtain the solution. \citep{liu2019optimization} present a model where drones are used to perform on-demand meal delivery services. It is a pick-up and delivery problem where the meal orders have to be fulfilled in a dynamic order. A time-discretized MIP formulation and a heuristic are proposed in the paper. Additional researches related to this topic includes \citep{9157260} and \citep{choudhury2021efficient}.\par

Another class of drone routing problem is the one that focuses on truck-drone in-tandem delivery systems. One obvious advantage of this delivery method is the drone can greatly extend their service radius because now the truck could be served as a drone hub that launch/retrieve the drone. Besides, it also can be seen as a natural transitional method from the traditional truck-only delivery to fully automated drone fleet delivery. This newly emerged delivery method is firstly proposed in \citep{Murray2015}, where it introduces the flying sidekick TSP (FSTSP), where a single truck and a single drone aim to serve a set of customers. The truck and drone can perform delivery tasks independently, while the drone could be launched from the truck and retrieved later at a different location. The drone can only carry one parcel per trip, and constant launching/retrieving time is considered. The problem aims to minimize the coordinated route duration, including the waiting time caused by the synchronization of both vehicles. A MILP formulation, along with two heuristics, is also proposed in the paper.  Since then, various research has been conducted, either trying to improve the solving efficiency of FSTSP or to explore new ways of combining multiple trucks and drones. \par

\citep{ferrandez2016optimization, Agatz2018, bouman2018dynamic, EsYurek2018, DeFreitas2018, Ha2018} are some of the examples that aim to study the variant of FSTSP or to propose new solution methods. In particular, \citep{ferrandez2016optimization} study the effectiveness of this new delivery system and adopts K-means algorithms to determine optimal launch locations and a genetic algorithm to obtain truck route between launch locations. It finds that the new system is more efficient in terms of energy and time. Similar analyses are also performed in \citep{carlsson2018coordinated} where it assumes that drones can be launched/retrieved at some points which are not necessarily customer locations. \citep{Agatz2018} study a variant of FSTSP where the drone can be launched/retrieved at the same customer location. This variant is named the traveling salesman problem with drones (TSPD). Two route-first, cluster-second heuristics are proposed to solve TSPD. \citep{bouman2018dynamic} present an exact dynamic programming algorithm to solve the same problem proposed in \citep{Agatz2018}. \citep{EsYurek2018} presents an iterative method to solve FSTSP by pre-deciding the customers served by the truck or the drone, where for each case, the truck route and drone route are obtained in sequence. This method performs well when the number of customers is small but unable to solve problems containing more than ten customers. \citep{DeFreitas2018} propose a variable neighborhood search method to solve both FSTSP and TSPD. \citep{Ha2018} study a variant of FSTSP where the objective function aims at minimizing total operational costs, which includes transportation cost and waiting cost. A heuristic method based on a greedy randomized adaptive search procedure (GRASP) is used to solve the problem. More recently, \citep{salama2020joint} studied a variant of FSTSP where multiple drones operate in conjunction with a single truck. In this problem, the truck needs to launch several drones at a focal point in each customer cluster. A three-step heuristic is presented, where the first step is to find the focal point and then assign customers to focal points, and lastly, the routing of a truck among these focal points.  Besides, \citep{RajMurray2020mfstsp} also studies FSTSP with multiple drones and considers different travel speeds, payload capacities, service times, and flight range. A MILP formulation is proposed, and a heuristic approach consisting of three subproblems is proposed. 

As a generalization of FSTSP and TSPD, the vehicle routing problem with drones (VRPD) is another attempt to combine trucks and drones in a more complex way. \citep{Wang2017} introduced VRPD, where a fleet of trucks, each equipped with multiple drones, aims to perform the delivery task. In this problem, drones can be launched/retrieved at any of the customer locations. An analysis of several worst-case scenarios is conducted, from which the best possible time savings can be derived. \citep{schermer2018algorithms} propose two route-first cluster-second heuristics to solve VRPD, where the improving phase is based on local search.  \citep{wang2019vehicle} extend the problem by proposing an arc-based MILP formulation. In this problem, there exist several drone hub nodes that serve as a hub that can refresh/reload drones. A branch-and-price algorithm is used to solve this problem.  \citep{schermer2019hybrid} considers an extension of VRPD, called vehicle routing problem with drones and en route operations (VRPDERO). In VRPDERO, drone operations might also start and end at some discrete points on the arcs traversed by the vehicles, and the objective is to minimize the makespan. A MILP formulation is proposed, and a variable neighborhood search is used to obtain a solution to the problem. \citep{sacramento2019adaptive} presents an adaptive large neighborhood search method to solve a VRPD variant where each truck is equipped with exactly one drone. \citep{kitjacharoenchai2020two} considers a two-echelon VRPD where truck routing and drone routing are separate. A similar problem is also seen in \citep{liu2020two}.

In this paper, we propose and analyze the the routing problem that involves both the EV and the drone, which is named as EVTSPD. In this problem, the shared energy assumption indicates that the synchronization between the EV and the drone has to be considered. Specifically, we focus on EVTSPD-FF which further assumes that the EV could refresh its battery energy with fixed amount of time every time it visits the charging station nodes. This assumption allows us to redefine the problem in a constructed multigraph. A MILP formulation is proposed based on the idea of multigraph, along with an exact branch-and-price scheme and an efficient variable neighbourhood search algorithm to solve the problem of different size.

\section{Problem Description and Mathematical Formulation of EVTSPD-FF}
In EVTSPD, a central planner aims to serve all the customers in the network using a single EV equipped with a single drone, which is initially parked at the depot. In the operation process, the EV and the drone perform delivery tasks in tandem, where the EV can serve as the drone hub that can launch, retrieve and reload the drone, and both the EV and the drone can serve customers independently. This operation pattern is similar to the flying sidekick traveling salesman problem introduced in \citep{Murray2015} and the traveling salesman problem with drone studied in \citep{Agatz2018}. However, the most significant difference between EVTSPD and the previous TSPD research is that the EV is associated with driving energy which decreases gradually as it traverses arcs in the network. Thus, in EVTSPD, the EV needs to visit specialized charging station nodes in the network to recharge the energy. The objective of this problem is to obtain an EV-drone coordinated route that has minimal operational time. \par

In the modeling process, the following assumptions or simplifications are adopted, most of which are in analogy to existing VRPs and EVRPs:

\begin{enumerate}
    \item The drone is assumed to be a rotary-wing UAV so that it can take-off and land vertically on the EV.
    \item Both the EV's and the drone's speed are assumed to be constant, and the altitude differences in the network are ignored.
    \item The consumed energy of traveling is assumed to be a linear function of traveled distance. 
    \item At the beginning of the planning horizon, both the EV and the drone are located at the depot and the EV is assumed to be fully-charged.
    \item The EV can recharge its energy at the charging stations in the network. The charging process is assumed to start immediately as the EV arrives at the station. It is also assumed that all charging stations have unlimited capacities.
    \item All charging stations are considered to be level-3 fast charging stations so that it would not lead to unacceptable high charging times.
    
\end{enumerate}

The EVTSPD is defined on an undirected, complete graph $G = (V,E)$, with a node set $V$ consisting of the customer set $C = \{1, 2, . . . ,c\}$, the depot set $\{0, 0^{'}\}$, and a set of charging stations (CS) nodes $S = \{c+1,c+2, . . . ,c+s\}$. The node set is thus $V = \{0, 0^{'}\} \cup C \cup S$ and $|V| = c + s + 1$. . The edge set $E = \{(i,j): i,j \in V, i < j\}$ contains the edges connecting vertices of $V$. Each edge $(i,j)$ is associated with two non-negative travel time cost $c^{T}_{ij}$ and $c^{D}_{ij}$, which corresponds to the travel time needed for the truck and UAV to travel from node $i$ to node $j$, respectively. In general, as UAV travels faster than the truck, $c^{T}_{ij}$ is normally no less than $c^{D}_{ij}$. Besides, each edge is also associated with two non-negative energy cost $e^{T}_{ij}$ and $e^{D}_{ij}$ for the truck and the UAV, respectively. Note that both $e^{T}_{ij}$ and $e^{D}_{ij}$ are also measured in time unit. To simplify the problem, we further assume that for all the edges $e^{T}_{ij} \leq Q^{T}$ and $e^{D}_{ij} \leq Q^{D}$, where $Q^{T}$ and $Q^{D}$ are energy capacity for the truck and drone, respectively. \par

In this paper, as we assume that the truck is an electric vehicle. The term "truck", "electric vehicle", or "electric truck" are used interchangeably unless with further instruction. Similarly, the term "UAV" and "drone" are also used interchangeably. The planner aims to serve all the customers in the network using the truck or the drone. In the operational stage, after they depart from the depot, both the truck and the drone can serve customers independently. Besides, the truck serves as the drone hub to launch/retrieve the drone at a customer node. After launching the drone, the truck continues its planned route and serves customers before retrieving the drone at another customer node. Meanwhile, the drone will also perform delivery tasks, after which it will be retrieved by the truck. We assume that the launching/retrieving time is zero (this assumption will be relaxed later in this section). In practice, this can be done if the truck carries multiple drones and only launches the fully loaded/charged ones after retrieving the drone in operation. We define the process of the drone's operation from the launch to the next retrieve as "drone sortie" or "UAV sortie." In each drone sortie, we assume only one customer can be served, considering the low loading capacity of the drone. This indicates that each drone sortie contains exactly three nodes: launch node, customer node (also called "drone node"), and retrieve node. Besides, waiting time might be generated for each drone sortie when the truck and the drone arrive at the retrieve node at different times. \par

The goal of EVTSPD is to find a coordinated tour that starts and ends at the depot and visits a subset of vertices (including charging stations when necessary) such that the total delivery time is minimized. All customers must be served once, either by the EV or by the drone. 

The final feasible solution of EVTSPD consists of the EV route and several non-overlapping UAV routes, which start and end on the EV route. A simple example is shown in Figure \ref{fig:EV-UAV coordinated route}, where the EV route is $\{0-1-3-5-6-0 \}$ while the UAV route is $\{1-2-3, 3-4-6\}$. We define "drone node" as the node that is solely visited by the drone, "truck node" as the node that is solely visited by the truck, and "combined node" as the node that is visited by both the truck and the drone.

\begin{figure}
    \centering
    \includegraphics[width = .8\textwidth]{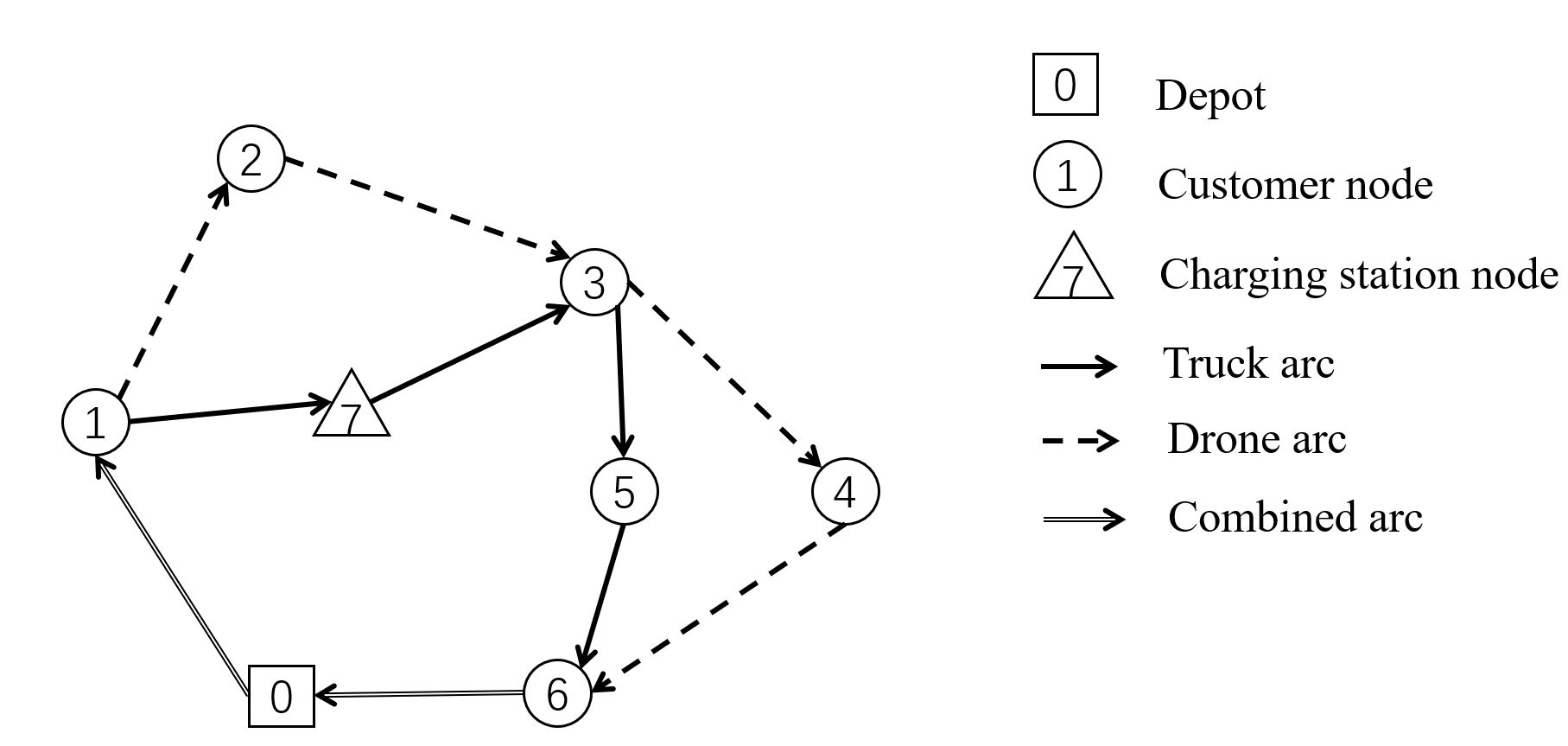}
    \caption{A simple representation of the EV-UAV coordinated route}
    \label{fig:EV-UAV coordinated route}
\end{figure}

Below is a summary of parameters that are available in the original network $G$:
{\renewcommand\arraystretch{1.0}
\noindent\begin{longtable*}
{@{}l @{\quad:\quad} p{15cm}@{}}
$c^{T}_{ij}$ & Travel time for the EV when traversing arc $(i,j)$\\
$c^{D}_{ij}$ &  Travel time for the drone when traversing arc $(i,j)$ \\
$d_{ijk}$ &  Travel time cost for the drone to launch from node $i$, serves node $j$ and return to node $k$\\
$e^{T}_{ij}$ & Driving energy cost for the EV when traversing arc $(i,j)$\\
$e^{D}_{ij}$ & Flight energy cost for the drone when traversing arc $(i,j)$\\
$e^{D}_{ijk}$ &  Flight energy cost for the drone to launch from node $i$, serves node $j$ and return to node $k$\\
$Q^{D}$  & Flight energy capacity of the drone \\
$Q^{T}$  & Driving energy capacity of the EV \\ 
$M$ & A positive large number\\
\end{longtable*}}

\subsection{The shared energy assumption}

One special assumption adopted in this research, distinct from previous VRP or TSPD research, is the shared energy assumption between the EV and the drone. To be more specific, we assume that there is a Lithium-Ion battery of limited capacity on the EV that can be used by both EV and drone. This battery not only serves as the energy source for the EV, it can also recharge the battery on the drone if it is on board. During the operation, if the drone is launched to serve a customer independently, the energy consumed by this drone sortie (which can be calculated beforehand) should be deducted from the remaining energy level of the battery. \par
To better illustrate this assumption, denote $b^{a}_{i}$ as the remaining energy of EV upon arrival at node $i$ and $b^{d}_{i}$ as the remaining energy of EV upon departure from node $i$.  Using the network in Figure \ref{fig:energy assumption}, assume the electric vehicle launches the UAV at node $1$, travels to node $3$, and retrieves the UAV at node $4$ while the UAV serves the customer $2$. If $b^{a}_{1} = 100$, which indicates that EV's remaining battery level upon arrival at node 1 is 100, then the EV's battery level upon departure from node 1 could be calculated as $b^{d}_{1} = 100 - e^{D}_{124} = 100-20 = 80$, where $e^{D}_{124}$ represents the required energy of the UAV to be launched at node 1, serves customer 2 and returns to node 4. The battery level of the other nodes is also shown in Figure \ref{fig:energy assumption}. \par
There are two major reasons to make this assumption. First of all, unlike bigger ones used in military applications, the smaller drones, mainly used in logistics, usually have an inadequate power supply. An EV-drone coordinated delivery system can solve this problem as the battery on EV can also be used by the drone. \par

Second, simply ignoring the consumed energy during the drone's operation would lead to inaccurate EV driving range and overall result, as the drone's energy consumption rate is comparable to that of an EV. The average energy consumption rate for EVs on the market is around 200 wh/km \citep{EVD}. For the specialized electric delivery van, whose driving range is about 86 km with 10 kWh LiFePO4 battery and 200 km with 20 kWh battery, the energy consumption rate is about 100-116 wh/km \citep{AlkeVan}. In terms of commercial drones, for example, DJI's MATRICE 600 PRO, when equipped with six TB47S batteries (4500 mAh, 22.2 V), can fly at most 16 minutes with a 6 kg payload, according to its official website \citep{DJI}. Assuming this situation, it can fly at a maximum speed of about 60 km/h. The drone can travel up to 16 km before the batteries are depleted, which indicates that the energy consumption rate is about 37.5 wh/km, about 20\% of a typical EV and 40\% of a smaller electric van.  The shared energy assumption aims to account for this extra 20\%-40\% percent of energy consumption from the drone.  Besides, to consider the difference between EVs and drones in their energy consumption rate, we introduce another parameter $\alpha$ to represent the energy consumption ratio between the drone and the EV.  In the numerical analysis section, this value is chosen as $\alpha = 0.4$. \par
In terms of the drone's charging time, in reality, it takes at least 30 minutes for the zero-energy-level battery to be fully charged. In practice, this time could be bypassed by swapping the used battery with the alternative backup batteries, which could be charged regardless of whether the drone is on-board or not. \par 

\begin{figure}
    \centering
    \includegraphics[width = .8\textwidth]{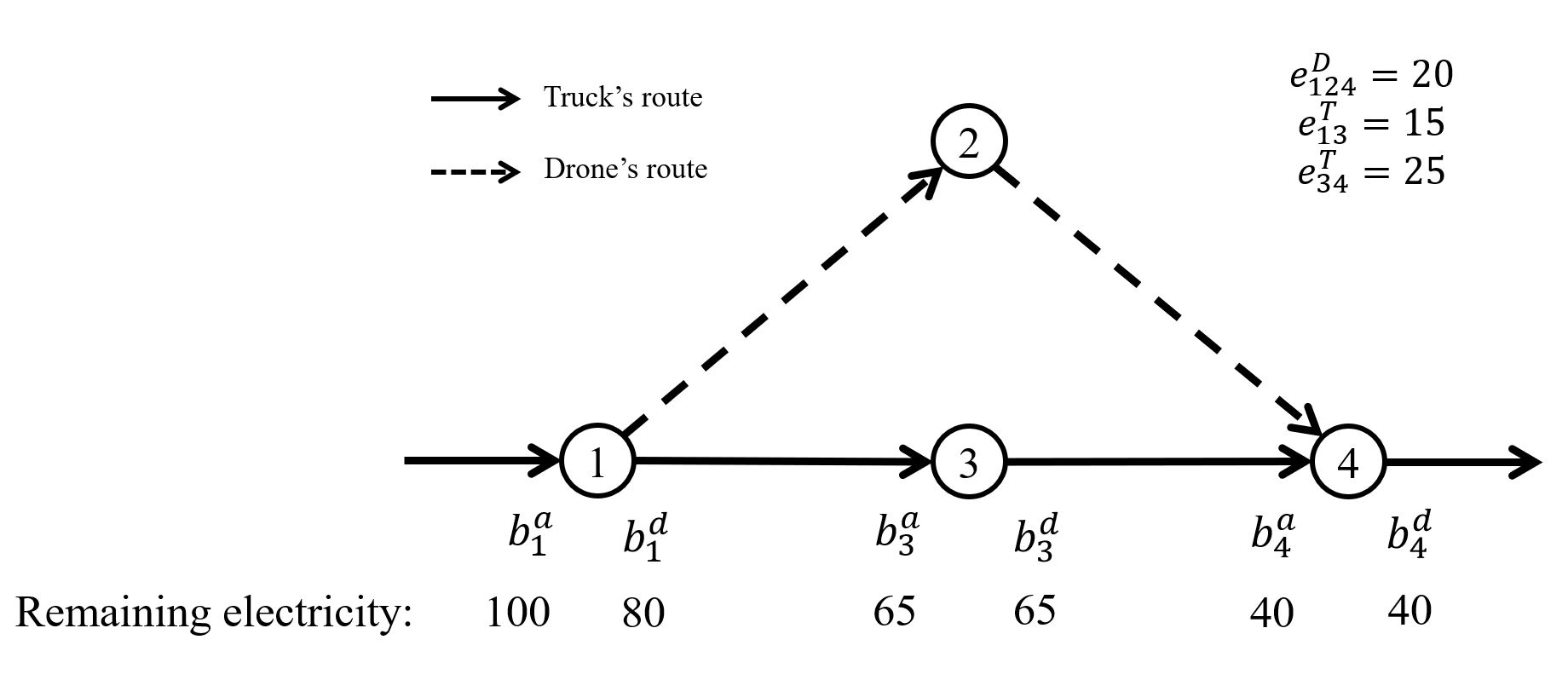}
    \caption{A simple representation of the shared energy assumption}
    \label{fig:energy assumption}
\end{figure}

\subsection{The charging policy}
In EVTSPD, the truck is assumed to be an electric vehicle associated with driving energy. As the truck traverses the arcs in the network, its remaining energy decreases gradually. Thus, the truck needs to visit charging station nodes when necessary to avoid its remaining energy being depleted. Note that no limit is set on the number of stops made for recharging at any charging stations. Thus, the final solution might contain more than one visit to some specific charging station nodes, while some other charging station nodes might never be visited at all.

In this paper, we assume that the electric vehicle is BEV and the charging stations are battery swapping stations (BSSs). For this setting, the BEV could renew its battery energy at battery swapping stations (BSSs) by swapping empty batteries with fully charged ones, which could be finished within minutes \citep{sarker2014optimal}. Thus, the EV is expected to refresh its battery to full capacity with a fixed time every time it visits a charging station. This fixed-time-full-charge policy for EV is proposed in the early EV-related research and is commonly adopted in the literature \citep{conrad2011recharging, Erdogan2012}. EVTSPD which adopts this assumption is named as EVTSPD-FF.

Although this fixed-time-full-charge assumption might not be widely adopted in practice, this paper later shows that this setting enables us to create a multigraph that only contains customer nodes. The EVTSPD-FF can be defined on this multigraph instead of the original network. In other words, this assumption enables us to achieve significant computational efficiency that is otherwise impossible if this assumption is discarded. Later in this paper, we will show how to exploit this property to solve the problem exactly with a branch-and-price algorithm. \par

\subsection{Construction of the multigraph}
Before we present our MILP model for EVTSPD-FF, one thing to note is that this proposed MILP formulation is not defined on the original network $G$ but a constructed multigraph $\mathscr{G}$. We will explain the motivations and reasons for constructing this new network before the detailed steps of the multigraph construction. \par

Compared to the normal TSPD, the major complication in EVTSPD-FF is the extra EV driving range constraints: the planner needs to decide when and which charging station nodes should be visited before the driving energy is depleted. However, generating such a feasible route based on the original network $G$ is difficult. To enable multiple visits to the same charging station nodes, one needs to create several "dummy" copies of the charging station node set and insert these "dummy" nodes back into $G$ to construct an augmented network $G_{0}$. In \citep{andelmin2017exact}, the authors propose an efficient way to overcome this difficulty: reformulate the original network $G$ into a multigraph. In this research, we also adopt this technique to construct a multigraph for two purposes: it serves as an alternative network that EVTSPD-FF could be defined on. The multigraph is also used to generate feasible routes in the BP algorithm introduced later. For the rest of this subsection, we will explain the steps of constructing this multigraph.

Denote $G_{S} = (S, E_{S})$ as the subgraph containing the charging station node set $S$, and arc set $E_{S}$. This arc set $E_{S}$ contains all the arcs between the CS nodes, i.e., $E_{S} = \{(k,l): k,l\in S, e^{T}_{kl}\leq Q^{T}\}$. Thus, network $G_{S}$ is a graph that only contains CS nodes and CS arcs. Let $C_{0} = C \cup \{0\}$ be the set containing all customers and the single depot. Meanwhile, we define \textit{refuel path} as a path that starts at $i \in C_{0}$, ends at $j \in C_{0}, i \neq j$ and traverses a path in graph $G_{S}$. Besides, if a vehicle travels from node $i$ to node $j$ directly by traversing arc $(i,j)$, the path is called a "direct path". Therefore, a path between two customer nodes is either a direct path or a refuel path.  For every direct path we assign a index 0 to it so that a direct path between $i$ and $j$ is represented as $(i,j,0)$. We also assign a unique index $p>0$ to every refuel path so that a refuel path between $i$ and $j$ is represented as $(i,j,p)$. \par

We can now define an extended arc set $\mathscr{E}$ that contains all the refuel paths and direct arcs $(i, j), \forall i, j \in C_{0}$. The refuel path $p$ that starts at $i$ and ends at $j$ is represented as arc $(i, j, p)$ in $\mathscr{E}$ and the direct arc that starts at $i$ and ends at $j$ is represented as arc $(i, j, 0)$ in $\mathscr{E}$. In this way, the arc set $\mathscr{E}$ contains all the paths that starts at a node $i \in C_{0}$, end at another node $j \in C_{0}$ and visits no other customers along the route, regardless how many CS nodes are visited between $i$ and $j$. \par

At this point, we can define a multigraph $\mathscr{G} = (C_{0}, \mathscr{E})$ where node-set $C_{0} = C \cup \{0\}$ is the set containing all customers and the single depot and $\mathscr{E}$ is the extended arc set. Each path from $i$ to $j$ in original network $G$ will be represented as an arc from $i$ to $j$ in the multigraph $\mathscr{G}$. Compared to the original network $G$, in the multigraph $\mathscr{G}$, the charging station nodes can be eliminated as they are inherently represented by each arc in $\mathscr{G}$. In this way, any original route in the original network $G$ can be represented as an alternative route in the multigraph $\mathscr{G}$, which has the same property as the original route.\par

Keep in mind that the main reason for constructing this multigraph $\mathscr{G}$ is to create an efficient alternative network to avoid the construction of the augmented network, where the charging station nodes are duplicated. Also, note that as the multigraph $\mathscr{G}$ contains all the direct arcs between nodes in $C_{0}$ in the original network $G$ and we prohibit the drones from being launched or retrieved at the charging station nodes, any feasible drone sorties in the original network $G$ corresponds to the same drone sortie in $\mathscr{G}$. For this reason, both the EV's route and drone's sorties can be constructed on the multigraph $\mathscr{G}$. This indicates that EVTSPD-FF can be modeled on multigraph $\mathscr{G}$. \par 

Next, we will introduce the properties associated with each arc in $\mathscr{E}$. Each arc $(i, j, p), p > 0$ that represents a refuel path $p$ is associated with three properties: minimum energy requirement $e^{T}_{(i,j,p)}$, remaining energy $e^{r}_{(i,j,p)}$, and travel time cost $c^{T}_{(i,j,p)}$. Minimum energy requirement of arc $(i,j,p)$ is the minimum energy required at node $i$ to traverse this arc. Remaining energy of arc $(i,j,p)$ indicates the remaining energy of the truck after it traverses this arc. Travel time cost of arc $(i,j,p)$ is the time cost of EV when traversing this arc. To calculate these properties, denote $e^{1}_{(i,j,p)}$ and $e^{2}_{(i,j,p)}$ as the energy consumption of the first and the last arc in original network $G$ traversed by refuel path $p$. Apparently, $e^{T}_{(i,j,p)} = e^{1}_{(i,j,p)}$ and $e^{r}_{(i,j,p)} = Q^{T} - e^{2}_{(i,j,p)}$. As for $c^{T}_{(i,j,p)}$, it is simply the sum of the travel time of all the original direct arcs in $p$. \par

As for each arc $(i, j, 0)$ that represents a direct arc $(i,j)$ in original network $G$, it only has two properties: minimum energy requirement and travel time cost, which are calculated in the same way as described above. Keep in mind that as arc $(i, j, 0) \in \mathscr{G}$ corresponds to a direct arc $(i, j)$ in $G$, its first and last arc are the same. Also note that this arc has no associated remaining energy $e^{r}_{(i,j,p)}$. This is because the truck visits no CS nodes when traversing direct arc. Thus, the remaining energy of EV after traversing direct arc $(i, j, 0)$ is not a fixed value, and thus no remaining energy is associated with this arc. \par

To better explain these properties in $\mathscr{E}$ and $E$, a simple case is shown in Figure \ref{fig:multi_illu}. In this figure, the first column represents the refuel path or direct arc in the original network $G$, and the second column shows their corresponding arcs in multigraph $\mathscr{G}$.

\begin{figure}[H]
    \centering
    \includegraphics[width = .9\textwidth]{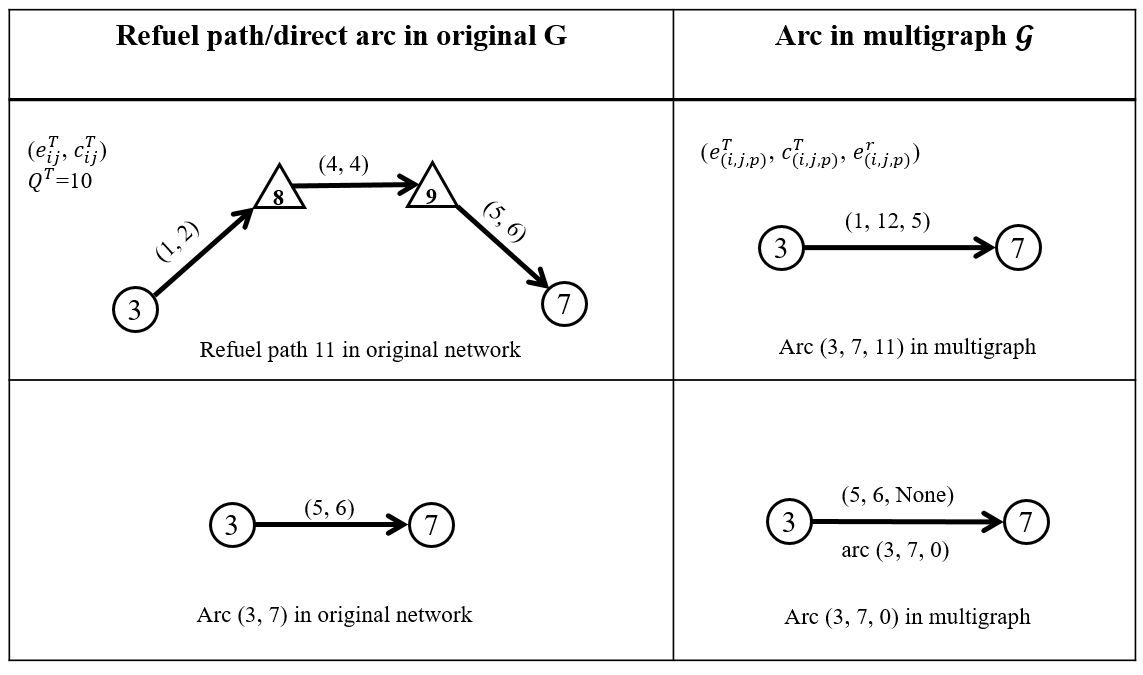}
    \caption{A representation of the refuel path or direct arcs in two networks}
    \label{fig:multi_illu}
\end{figure}

Theoretically, the number of arcs in $\mathscr{G}$ is prohibitive as the number of refuel paths between any two nodes is potentially infinite. However, a lot of these refuel paths can be eliminated from $\mathscr{G}$ as they are dominated by other paths and will not appear in the optimal solution of the problem. Below we introduce the dominance rule among the refuel paths.

\begin{theorem}
Let $P= (i,k,..., l,j)$ be a refuel path from $i \in N_{d}$ to $j \in N_{d}$. This refuel path is represented as arc $(i,j, p)$ in $\mathscr{G}$. Let $P^{'}= (i,k^{'},..., l^{'},j)$ be another refuel path from $i$ to $j$. This refuel path is represented as arc $(i,j, p^{'})$ in $\mathscr{G}$. Refuel path $P$ is said to be dominated by $P^{'}$ if the following conditions are satisfied: (i) $e^{T}_{(i,j,p)} \geq e^{T}_{(i,j,p^{'})}$, (ii) $e^{r}_{(i,j,p)} \leq e^{r}_{(i,j,p^{'})}$, (iii) $c^{T}_{(i,j,p)} \geq c^{T}_{(i,j,p^{'})}$ and at least one of the three inequalities is strict.
\end{theorem}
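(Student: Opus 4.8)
The statement is really a safe-elimination claim: a refuel arc that is dominated in the stated three-criteria sense need not appear in any optimal solution, so it can be deleted from $\mathscr{G}$. The plan is to prove this by a replacement (exchange) argument. I will take an arbitrary feasible EV--drone coordinated route that traverses the dominated refuel arc $(i,j,p)$ and transform it into a feasible route traversing $(i,j,p')$ instead, whose total delivery time is no larger. Since both arcs share the same endpoints $i,j \in N_d$ and neither a refuel path nor a direct arc visits any customer strictly between $i$ and $j$, swapping $P$ for $P'$ is a purely local modification: every launch node, drone node, retrieve node, and customer-to-vehicle assignment in the rest of the solution stays exactly as it was, so I only have to check (a) energy feasibility of the EV across the swapped segment and downstream of it, and (b) the change in the objective.

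First I would verify entry feasibility. Because the EV is recharged to full capacity $Q^T$ at the last charging station of any refuel path and every arc satisfies $e^T_{kl}\le Q^T$, the only binding driving-energy constraint on the segment is its first arc, which equals $e^T_{(i,j,p)}$. Hence traversing $P$ requires the post-launch energy level $b^d_i \ge e^T_{(i,j,p)}$; by condition (i), $e^T_{(i,j,p')}\le e^T_{(i,j,p)}$, so the same $b^d_i$ already suffices to traverse $P'$. Note that the drone-sortie deduction producing $b^d_i$ from $b^a_i$ occurs at node $i$ and is independent of which refuel path follows, so the shared-energy bookkeeping is untouched by the swap. Next I would propagate energy past $j$: traversing $P'$ leaves the EV with exactly $e^r_{(i,j,p')} = Q^T - e^2_{(i,j,p')}$, which by condition (ii) is at least $e^r_{(i,j,p)}$. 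Since any later recharge resets the level to full regardless of the arriving value, a larger energy level at $j$ can never violate a downstream driving constraint or a downstream drone-launch deduction that the original route satisfied, so feasibility is preserved along the entire tail of the route.

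It remains to control the objective, and this is where coordination with the drone makes the argument nontrivial. The total delivery time is not a plain sum, because at each retrieve node the EV's departure time equals the \emph{maximum} of the EV and drone arrival times, so waiting can absorb or expose changes in the EV travel time. The key observation I would formalize is monotonicity: the completion time at $0'$ is a nondecreasing function of every individual arc travel time, since it is built from the arc costs using only additions (travel along arcs) and $\max$ operations (synchronization at retrieve nodes), both of which are monotone. By condition (iii) the swap only decreases the EV travel cost on the segment, $c^T_{(i,j,p')}\le c^T_{(i,j,p)}$, while leaving every drone flight time and every other EV arc cost unchanged; monotonicity then yields a completion time no larger than the original. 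With at least one of (i)--(iii) strict, $P'$ is weakly better than $P$ in all three measures, so there is always an optimal solution using $P'$ rather than $P$, which justifies deleting $P$.

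I expect the main obstacle to be this objective step rather than the energy steps: one must argue carefully that shrinking a single EV arc time cannot \emph{increase} the makespan through the waiting-time synchronization. The cleanest route is to state and exploit the monotonicity of the forward time-propagation recursion once and for all, rather than tracking individual waiting times case by case along the tour.
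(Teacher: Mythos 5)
Your argument is correct, and it is worth noting that the paper itself records no proof of this statement at all: immediately after the theorem it simply asserts that the rule is ``easy to understand and prove'' and moves on to the consequence that only non-dominated refuel paths need be kept when constructing $\mathscr{G}$. Your exchange argument is the natural formalization of that unproved claim, and it correctly isolates the two facts that genuinely need checking. On the energy side, condition (i) handles entry into the swapped segment (the first arc is the only binding constraint of a refuel path, since every intermediate station recharges the EV to $Q^{T}$ and all retained arcs satisfy $e^{T}_{kl}\leq Q^{T}$), condition (ii) plus monotonicity of the energy recursion handles the tail, and you rightly observe that the shared-energy deduction at the launch node $i$ is independent of which refuel path follows. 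On the objective side you identify the point that a one-line justification glosses over: because of synchronization at retrieve nodes, the makespan is not a plain sum of arc costs, so one must argue that the forward time-propagation recursion---built only from additions and $\max$ operations, both monotone---cannot increase when a single EV arc cost shrinks. That monotonicity observation is exactly the right tool, and it is the same style of reasoning the paper later uses (case analysis over $\max$ terms) in its proof of the DP dominance rule, Theorem 3. Two minor points you could make explicit: the strictness requirement in the theorem is not needed for your replacement step (weak dominance suffices there); its role is to make the dominance relation a strict partial order, so that iterated replacement terminates and one never deletes both members of a tied pair. Otherwise your proof is complete and strictly more rigorous than what the paper provides.
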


The above dominance rule is easy to understand and prove, which indicates that a path $P$ is better than (or "dominates") another one if it has less energy requirement, greater remaining energy, and less travel time. As there exists at least one optimal solution to EVTSPD-FF whose traversed refuel paths are non-dominated, only non-dominated refuel paths are necessary when constructing the multigraph $\mathscr{G}$. For the rest of this section, the steps of the multigraph construction are explained in detail.  

In general, the construction process is divided into two phases: the network elimination phase and the construction phase. In the elimination phase, some arcs that would not appear in the feasible solution of the problem are eliminated from the original network $G$. The first step is straightforward. The second step was proposed in \citep{schneider2014electric} which involved eliminating arcs that would violate the fuel energy constraint when visiting the two closest CS nodes. 

\begin{enumerate}
    \item Remove from $A$ each arc $(i, j)$ such that $e^{T}_{ij} > Q^{T}$ 
    \item Remove from $A$ each arc $(i, j)$ such that $i \in I, j \in I$, and $e^{T}_{si} + e^{T}_{ij} + e^{T}_{jt} > Q^{T}, \forall \ s, t \in S \cup \{0\}$;
\end{enumerate}

\begin{algorithm}
  \caption{Multigraph construction phase}\label{alg: multigraph_cons}
  \hspace*{\algorithmicindent} \textbf{Input:} Reduced original network $G$\\
  \hspace*{\algorithmicindent} \textbf{Output:} Multigraph $\mathscr{G}$
  \begin{algorithmic}[1]
      \State Construct network $G_{S} = (S, A_{S})$ where $S$ is the set of all CS nodes and $A_{S}$ is the arc such that $(k,l):k,l \in S, e^{T}_{kl} \leq Q^{T}$
      \State Run the Floyd-Warshall algorithm to get all-pair shortest path between any two CS nodes in $G_{S}$
      \State Denote $r^{S}_{kl}$ as the shortest path between two CS nodes $k,l$
      \For{each node pair $(i,j): i,j \in C_{0}, i \neq j$}
          \State Denote the feasible path set from $i$ to $j$ as $\mathscr{P}_{ij}$ and derive it by following steps:
              \begin{itemize}[leftmargin=1cm, font=\small]
                \item \small Check if node $i$ is linked to $j$ directly. If so, add the direct arc to $\mathscr{P}_{ij}$
                \item Check if there exists feasible path from $i$ to $j$ when a CS node $k \in S$ is added between $i$ and $j$ such that $e^{T}_{ik} \leq Q, e^{T}_{kj} \leq Q$. If any, add the feasible path to $\mathscr{P}_{ij}$
                \item Check if there exist refuel path from $i$ to $j$ that traversing shortest path $r^{S}_{kl}$ such that $e^{T}_{ik} \leq Q^{T}, e^{T}_{lj} \leq Q^{T}$. If any, add the refuel path to $\mathscr{P}_{ij}$
             \end{itemize}
          \State Denote $\mathscr{P}^{'}_{ij}$ as the set containing all the non-dominated paths and initialize $\mathscr{P}^{'}_{ij} = \emptyset$
          \State For each feasible path $p \in \mathscr{P}_{ij}$, if no path in $\mathscr{P}^{'}_{ij}$ can dominate $p$, add it to $\mathscr{P}^{'}_{ij}$. If $p$ dominates some path $p^{'} \in \mathscr{P}^{'}_{ij}$, delete $p^{'}$
          \State Add all the remaining paths in $\mathscr{P}^{'}_{ij}$ to the multigraph $\mathscr{G}$ as arc $(i,j,p)$, where $p$ is a unique index so that we can keep track of the exact route in the original network
      \EndFor
      \State \textbf{return} Multigraph $\mathscr{G}$  
  \end{algorithmic}
\end{algorithm}

After the elimination phase,  Algorithm \ref{alg: multigraph_cons} describes the the construction phase. In line 5, we derive all the feasible paths from $i$ to $j$ by checking three situations: the vehicle travels from $i$ to $j$ directly, the vehicle travels from $i$ to $j$ while traversing only one CS node, and the vehicle travels from $i$ to $j$ while traversing two or more CS nodes using $r^{S}_{kl}$. In line 6, we use dominance rule 1 to check if these feasible paths are non-dominated. Besides, in line 8, we need to assign a unique index $p$ to each non-dominated path to derive the route in the original network. Also, note that when adding arcs to $\mathscr{G}$, we need also store the arc's energy requirement, time cost, and remaining energy (if any). \par

It is worth noting that an upper bound on the number of arcs of $\mathscr{G}$ is $(n + 1)^{2}s^{2}k$ where $k$ is the maximum number of nodes of the shortest path in $G_{S}$ between any two stations, and $s$ is the number of CS nodes. However, the number of arcs reduces significantly in practice with dominance rule 1 and Algorithm 1. Based on the numerical analysis, we found that the number of arcs in $\mathscr{G}$ is about 3-8 times that in the original network $G$ (only considering the arcs that traverse two customers nodes). \citep{schneider2014electric} provided a comprehensive analysis of the multigraph $\mathscr{G}$ where the multigraph is compared with the extended graph, which assumes that the vehicle can make at most one refueling stop when traveling between two customers. \par

\subsection{MILP formulation}
With the construction of $\mathscr{G}$, now we are ready to present the arc-based MILP formulation of EVTSPD-FF. Below is a summary of parameters or sets used in the model:

\noindent\begin{longtable*}
{@{}l @{\quad:\quad} p{15cm}@{}}
$e^{T}_{ijp}$  & minimum energy requirement of arc $(i,j,p)$\\
$e^{r}_{ijp}$  & remaining energy of arc $(i,j,p): p > 0$\\
$c^{T}_{ijp}$  & truck travel time cost of arc $(i,j,p)$\\
$c^{D}_{ijp}$  & drone travel time cost of arc $(i,j,p)$\\
$d_{ijk}$ & total drone travel time cost of sortie $(i-j-k)$\\
$D$ & set that contains every feasible sortie $(i-j-k)$\\
\end{longtable*}

\newpage
Below is a summary of the decision variables used in the model:
\noindent\begin{longtable*}
{@{}l @{\quad:\quad} p{12cm}@{}}
$x^{T}_{ijp} \in \{0,1\}$  & Equals one if the truck traverses arc $(i,j,p)$ in the multigraph (no matter if the drone is on-board or not)\\
$x^{D}_{ijp} \in \{0,1\}$  & Equals one if the drone traverses arc $(i,j,p)$ in the multigraph (no matter if it is on-board or not)\\
$y_{ijk} \in \{0,1\}$  & Equals one if the drone is launched from node $i$, travels to node $j$ and returns to the EV at node $k$ and zero otherwise \\
$y^{T}_{i}\in \{0,1\}$ & Equals one if customer node $i$ is a truck customer\\
$y^{D}_{i}\in \{0,1\}$ & Equals one if customer node $i$ is a drone customer\\
$y^{C}_{i}\in \{0,1\}$ & Equals one if customer node $i$ is a combined customer\\
$t_{i}$ & Arrival time of the truck or the drone or both at node $i \in C_{0}$\\
$b_{i}^{a1} \geq 0$ & Theoretical remaining driving energy charge of the EV upon arrival at node $i$, which is measured in time units. \\
$b_{i}^{a2} \geq 0$ & Actual remaining driving energy charge of the EV upon arrival at node $i$, which is measured in time units \\
$b_{i}^{d} \geq 0$ & Remaining battery charge of the EV upon departure from node $i$, which is measured in time units  \\
\end{longtable*}
Here we introduce two driving energy charges upon arrival at node $i$: $b_{i}^{a1}$ and $b_{i}^{a2}$. The first one is a theoretical value which aims to check if the remaining battery charge at previous node $j$ is sufficient to traverse the arc $(j,i,p)$. If $b_{i}^{a1} < 0$, this indicates that the truck cannot traverse the arc $(j,i,p)$ from node $j$.  $b_{i}^{a2}$ is the actual remaining charge upon arrival at node $i$. If arc $(j,i,p): p > 0$ is traversed, then $b_{i}^{a2} = e^{r}_{jip}$. If arc $(j,i,0)$ is traversed, then $b_{i}^{a2} = b_{i}^{a1}$. 

\newpage
\noindent \textbf{\underline{Arc-based model:}} \par 

\vspace{0.2cm} 
\noindent \textbf{Objective} :
\begin{align}
\min \quad
& t_{0^{'}} \label{eqn:arcmodel_obj}
\end{align}

\noindent \textbf{Subject to:}
\begin{align}
\sum_{\substack{j: \\(i,j,p) \in \mathscr{G}}}\sum_{\substack{p: \\ (i,j,p) \in \mathscr{G}}} x^{T}_{ijp} & = \sum_{\substack{j:\\(j,i,p) \in \mathscr{G}}}\sum_{\substack{p: \\(j,i,p) \in \mathscr{G}}} x^{T}_{jip} && \forall \ i \in C \label{eqn:arcmodel_con1} \\
\sum_{\substack{j: \\(i,j,p) \in \mathscr{G}}}\sum_{\substack{p: \\ (i,j,p) \in \mathscr{G}}} x^{T}_{ijp} & = y^{T}_{i} + y^{C}_{i} && \forall \ i \in C \label{eqn:arcmodel_con2} \\
\sum_{\substack{j: \\(0,j,p) \in \mathscr{G}}}\sum_{\substack{p: \\ (0,j,p) \in \mathscr{G}}} x^{T}_{0jp} & = \sum_{\substack{i: \\(i,0^{'},p) \in \mathscr{G}}}\sum_{\substack{p: \\
(i,0^{'},p) \in \mathscr{G}}} x^{T}_{i0^{'}p} = 1  \label{eqn:arcmodel_con3} \\
\sum_{\substack{j: \\(i,j,0) \in \mathscr{G}}}\sum_{\substack{p: \\ (i,j,p) \in \mathscr{G}}}x^{D}_{ijp} & = \sum_{\substack{j: \\(j,i,0) \in \mathscr{G}}} \sum_{\substack{p: \\ (j,i,p) \in \mathscr{G}}} x^{D}_{jip} && \forall \ i \in C  \label{eqn:arcmodel_con4}\\
\sum_{\substack{j: \\(i,j,0) \in \mathscr{G}}}\sum_{\substack{p: \\ (i,j,p) \in \mathscr{G}}}x^{D}_{ijp} & = y^{D}_{i} + y^{C}_{i} && \forall \ i \in C \label{eqn:arcmodel_con5} \\
\sum_{\substack{j \\(0,j,p) \in \mathscr{G}}} \sum_{\substack{p: \\ (0,j,p) \in \mathscr{G}}}x^{D}_{0jp} & = \sum_{\substack{i: \\(i,0^{'},p) \in \mathscr{G}}} \sum_{\substack{p: \\ (i,0^{'},p) \in \mathscr{G}}} x^{D}_{i0^{'}p} = 1 \label{eqn:arcmodel_con6}\\
y^{T}_{i}+ y^{D}_{i}+ y^{C}_{i} &=1 && \forall i \in C  \label{eqn:arcmodel_con7}\\
t_{i}+c^{T}_{ijp} &\leq t_{j} + M(1-x^{T}_{ijp}) && \forall \ (i,j,p) \in \mathscr{E}   \label{eqn:arcmodel_con8}\\
t_{i}+c^{D}_{ijp} &\leq t_{j} + M(1-x^{D}_{ijp}) && \forall \ (i,j,p) \in \mathscr{E}   \label{eqn:arcmodel_con9}\\
x^{D}_{ijp} &\leq y^{C}_{i}+y^{C}_{j} && \forall \ (i,j,p) \in \mathscr{E}  \label{eqn:arcmodel_con10}\\
\sum_{\substack{p: \\ (0,i,p) \in \mathscr{G}}}x^{D}_{0ip}+\sum_{\substack{p: \\ (i,0^{'},p) \in \mathscr{G}}}x^{D}_{i0^{'}p} &\leq 1 && \forall \ i \in C \label{eqn:arcmodel_con11}\\
b^{a1}_{j} &\leq b^{d}_{i} - e^{T}_{ijp}+M(1-x^{T}_{ijp}) && \forall \ (i,j,p) \in \mathscr{E} \label{eqn:arcmodel_con12} \\
b^{a2}_{j} &\leq e^{r}_{ijp}+M(1-x^{T}_{ijp}) && \forall \ (i,j,p), p \neq 0 \in \mathscr{E} \label{eqn:arcmodel_con13} \\
b^{a2}_{j} &\leq b^{a1}_{j}+M(1-x^{T}_{ij0}) && \forall \ (i,j,0) \in \mathscr{E} \label{eqn:arcmodel_con14} \\
b^{a1}_{0} &= b^{a2}_{0} = Q^{T} \label{eqn:arcmodel_con15}\\
b^{d}_{i} &\leq b^{a2}_{i} - \sum_{\substack{j: \\(i,j,k) \in D}}\sum_{\substack{k: \\(i,j,k) \in D}} y_{ijk}d_{ijk} && \forall \ i \in C_{0} \backslash\{0^{'}\}  \label{eqn:arcmodel_con16}\\
\sum_{\substack{j: \\(i,j,k) \in D}}\sum_{\substack{k: \\(i,j,k) \in D}} y_{ijk} &\leq y^{C}_{i} && \forall \ i \in C_{0}\backslash\{0^{'}\} \label{eqn:arcmodel_con17} \\
\sum_{\substack{i: \\(i,j,k) \in D}}\sum_{\substack{k: \\(i,j,k) \in D}} y_{ijk} &= y^{D}_{j} && \forall \ j \in C \label{eqn:arcmodel_con18} \\
\sum_{\substack{i: \\(i,j,k) \in D}}\sum_{\substack{j: \\(i,j,k) \in D}} y_{ijk} &\leq y^{C}_{k} && \forall \ k \in C_{0}\backslash\{0\} \label{eqn:arcmodel_con19} \\
x^{D}_{ij0} + x^{D}_{jk0} &\geq 2y_{ijk} && \forall \ (i,j,k) \in D \label{eqn:arcmodel_con20}
\end{align}

In the above formulation, objective function (\ref{eqn:arcmodel_obj}) aims to minimize the final completion time. Constraints (\ref{eqn:arcmodel_con1}) are truck route flow balance constraints at node $i$. Constraints (\ref{eqn:arcmodel_con2}) state that if node $i$ is visited by the truck then it is either a truck node or a combined node. Constraints (\ref{eqn:arcmodel_con3}) indicate that the truck must depart from and return to the depot. Constraints (\ref{eqn:arcmodel_con4}) are the drone route flow balance constraints at node $i$. Constraints (\ref{eqn:arcmodel_con5}) state that if a drone traverses arc $(i,j,0)$, then node $i$ is either a drone node or combined node. Constraints (\ref{eqn:arcmodel_con6}) indicate that the drone must departs from and returns to the depot. Constraints (\ref{eqn:arcmodel_con7}) state that node $i$ needs to be either truck node, drone node or combined node.  Constraints (\ref{eqn:arcmodel_con8}) and (\ref{eqn:arcmodel_con9}) are the time coordination constraints for the truck and drone, respectively. Constraints (\ref{eqn:arcmodel_con10}) ensure that the drone travels from $i$ to $j$ if and only if the truck visits at least one of the two customers $i$ and $j$. Constraints (\ref{eqn:arcmodel_con11}) prohibit sortie of type $(0,i,0^{'})$. Constraints (\ref{eqn:arcmodel_con12}) is the driving energy constraints of the truck. Constraints (\ref{eqn:arcmodel_con13}) and (\ref{eqn:arcmodel_con14}) calculate the actual arrival battery level if different arcs are traversed. Constraints (\ref{eqn:arcmodel_con15}) indicate that the truck departs from the depot with fully-charged battery. Constraints (\ref{eqn:arcmodel_con16}) are the shared energy constraints. Constraints (\ref{eqn:arcmodel_con17}) -  (\ref{eqn:arcmodel_con20}) are the coordination constraints between $y_{ijk}$ and other decision variables.

\subsection{Handling additional side constraints}
In this section, we aim to relax some of the operational assumptions of EVTSPD-FF and explore how the arc-based model can handle these additional side constraints that are commonly seen in practice and literature. In this section, we only discuss some of the most common ones. For further discussion on this topic, the reader can refer to \citep{Cavani2021}.

\textbf{1. Fixed charging time or service time} \par
Assume that each node $i \in C \cup S$,  is associated with a fixed time cost $f_{i}$. If $i$ is a customer node, $f_{i}$ represents the fixed service time. If $i$ is a charging station node, $f_{i}$ represents the fixed charging time. \par

To account for this fixed node cost in the first model, we can simply add this fixed cost of node $i$ to the arc cost of the node's incoming or outgoing arcs. For example, define the modified travel times $c^{mT}_{ij} = c^{T}_{ij} + f_{i}, \forall \ i \in C \cup S$. We can handle these side constraints by replacing the travel times $c^{T}_{ij}$ with the modified travel times $c^{mT}_{ij}$. 

\textbf{2. Loops}\par
In this case, the launch node and the retrieve node of a drone sortie could be the same node. It indicates that the truck is allowed to wait at the launch node until the drone returns. With this assumption the drone sortie of the form $(i-j-i)$ is allowed which is also called a \textit{loop}.\par
Define a new binary decision variable $z_{ij}$, which equals to 1 if the drone performs loop $( i-j-i)$ and 0 otherwise. To account for the impact of the loop, the objective function is changed to:

\begin{equation}
    t^{*} = min \ t_{0'} + \sum_{j}(c^{D}_{0j}+c^{D}_{j0^{'}})z_{0j} +  \sum_{(i,j):i \neq j}(c^{D}_{ij}+c^{D}_{ji})z_{ij} \label{eqn:loops_obj}
\end{equation}
Constraints (\ref{eqn:arcmodel_con6}) are replaced by:
\begin{equation}
    y^{T}_{i}+ y^{D}_{i}+ y^{C}_{i}+\sum_{j}z_{ji} =1,  \ \forall \ i \in C_{0} \label{eqn:loops_newcon1}
\end{equation}
\begin{equation}
    z_{ij} \leq y^{C}_{i}, \ \forall \ i,j \in C_{0}: i \neq j \label{eqn:loops_newcon2}
\end{equation}
Constraints (\ref{eqn:arcmodel_con15}) are replaced by:
\begin{equation}
    b^{d2}_{i} \leq b^{a2}_{i} - \sum_{j}\sum_{k} y_{ijk}d_{ijk} - \sum_{j:j \neq i}(e^{D}_{ij}+e^{D}_{ji})z_{ij},  \ \forall \ i \in C_{0}  \label{eqn:loops_newcon3}
\end{equation}

The new objective (\ref{eqn:loops_obj}) considers the extra waiting time incurred by the loops. Constraints (\ref{eqn:loops_newcon1}) indicate that a node $i$ can be a truck node, combined node, or a drone node that is visited by a drone sortie or a loop. Constraints (\ref{eqn:loops_newcon2}) states that a node can perform loop only when it is a combined node. Constraints (\ref{eqn:loops_newcon3}) is the updated shared energy constraints that consider the existence of loops.

\textbf{3. Incompatible Customers}\par
A incompatible customer is a customer that can only be served by the truck and not by the drone. Denote $C^{D} \in C$ be the set of customers that can be served by the drone. We can add the following constraints into the first model to account for incompatible customers:

\begin{equation}
    y^{D}_{i} =0,  \ \forall \ i \in C \backslash C^{D} \label{eqn:incompcustomer_newcon1}
\end{equation}
\begin{equation}
    z_{ji} = 0, \ \forall \ i \in C \backslash C^{D},j \in C_{0}: i \neq j \label{eqn:incompcustomer_newcon2}
\end{equation}

\textbf{4. Launch and Retrieve time}\par

Assume that it takes the truck $S_{L}$ time to launch the drone, and during this process, the truck and the drone do not move. This assumption adds extra time cost to each launch node. We further assume that this launch time is not incurred if the launch node is the depot. To model launch time, we define a new binary variable $l_{i}, i \in C_{0}$, which equals one if node $i$ is a drone node and not directly visited by the drone from the depot. Then the objective (\ref{eqn:arcmodel_obj}) is replaced by:
\begin{equation}
   t^{*} = min \ t_{0'} + \sum_{j}(c^{D}_{0j}+c^{D}_{j0^{'}})z_{0j} +  \sum_{(i,j):i \neq j}(c^{D}_{ij}+c^{D}_{ji}+S_{L})z_{ij} + \sum_{i \in C_{0}}S_{L}l_{i} \label{eqn:launch_obj}
\end{equation}
Besides, we need to add constraints:
\begin{equation}
   l_{i} \geq y^{D}_{i} - x^{D}_{0i} \label{eqn:launch_newcon1}
\end{equation}
The new objective (\ref{eqn:launch_obj}) considers the extra launch time of the loops and at each launch node. The constraints (\ref{eqn:launch_newcon1}) states that $l_{i}$ equals one only when node $i$ is a drone node and not directly visited by the drone from the depot.\par

Similarly, assume that it takes the truck $S_{R}$ time to retrieve the drone, and during this process, the truck and the drone do not move. We can model this retrieve time by replacing objective (\ref{eqn:arcmodel_obj}) with the new objective:
\begin{equation}
   t^{*} = min \ t_{0'} + \sum_{j}(c^{D}_{0j}+c^{D}_{j0^{'}}+S_{R})z_{0j} +  \sum_{(i,j):i \neq j}(c^{D}_{ij}+c^{D}_{ji}+S_{R})z_{ij} + \sum_{i \in C_{0}}S_{R}y^{D}_{i} \label{eqn:retrieve_obj}
\end{equation}

\section{Solution method}
Solving the EVTSPD-FF of practical size with a commercial solver using the MILP formulations mentioned in the last section is prohibitive, considering the enormous number of constraints and decision variables involved. This section first introduces an exact branch-and-price solution algorithm. This solution method is similar to the work presented in \citep{roberti2021exact} but with extra complications to consider the fuel energy constraint and the multigraph. The branch-and-price algorithm is based on the set partitioning formulation of the problem. However, in the case of EVTSPD-FF, solving the pricing problem exactly, namely, obtaining a coordinated route with the least reduced cost that visits each customer exactly once while satisfying all the driving/flight range constraints, would be as difficult as solving the original problem. Thus, a relaxation of the EVTSPD-FF route based on the well-known ng-route relaxation \citep{baldacci2011new} is implemented. Using this relaxation, we might derive a "worse" lower bound than solving the pricing problem exactly. However, great computational benefits could be achieved, significantly increasing the algorithm's efficiency. This section presents the set partitioning formulation of EVTSPD-FF, followed by a detailed explanation of the ng-route relaxation adopted when solving the pricing problem using the dynamic programming technique. Then, the branch-and-bound strategy is introduced. \par
Although the BP algorithm is much more efficient than the commercial solver, it can only solve instances containing up to 10 customers within one hour, significantly less than its TSPD counterpart. Thus, at the end of this section, we also introduce a variable neighborhood search (VNS) heuristic, inspired by the work proposed in \citep{campuzano2021multi}, that aims to solve EVTSPD-FF instance of practical size. \par

\subsection{Set partitioning formulation}
An alternative formulation that has been widely used to model vehicle routing problems and its variants is that based on Set Partitioning (SP) or Set Covering (SC), which was initially proposed by \citep{balinski1964integer}. This formulation uses a possibly exponential number of binary variables for each elementary circuit starting from the depot, traversing each customer, and returning to the depot. The use of SP is widely used to tackle any variants of the vehicle routing problem because all the additional side constraints could be defined in a \textit{feasible} route. \par

The set partitioning formulation of EVTSPD-FF is also defined in the multigraph $\mathscr{G}$. Denote the coordinated route as $R$, which includes a truck route $R^{t}$ and drone route $R^{d}$. The truck route $R^{t}$ starts from the origin depot 0, traverses arcs $a_{k} = (i_{k}, i_{k+1}, p_{k}), a_{k} \in \mathscr{G}, k = 0,1,...,r$ and returns to the destination depot. The drone route $R^{d}$ include several drone sorties that has launch node and retrieve node, which are traversed by the truck route.  

The travel time cost of this route is $t(R) = \sum_{k}t(a_{k})+\sum_{k}w_{k} $, namely the sum of the travel times of the arcs it traverses and the extra waiting times at each (retrieve) node. The driving energy $q_{k}(R)$ of this route at each vertex $i_{k}$ is:
\begin{equation}\label{equ:auto_cal}
  q_{k}(R) =
    \begin{cases}
      Q^{T} & \text{if $k=0$}\\
      q_{k-1}(R)- e^{T}_{(i_{k-1}, i_{k}, p_{k-1})} & \text{if $p_{k-1}=0$}\\
      e^{r}_{(i_{k-1}, i_{k}, p_{k-1})} & \text{otherwise}
    \end{cases}       
\end{equation}
In equation (\ref{equ:auto_cal}), when the truck/drone departs origin depot 0, its remaining energy is $Q^{T}$. If the truck traverses an arc $(i_{k-1}, i_{k}, 0)$, then the remaining  energy at node $i_{k}$ is the remaining energy at node $i_{k-1}$ minus the consumed energy along the arc. If the truck traverses an arc $(i_{k-1}, i_{k}, p_{k-1})$ where $p_{k-1} \neq 0$, it means that the truck traverses refuel path $p_{k-1}$ from $i_{k-1}$ to $i_{k}$. Thus the remaining energy at node $i_{k}$ is a fixed value $e^{r}_{(i_{k-1}, i_{k}, p_{k-1})}$ associated with the traversed arc.

Meanwhile, a route $R$ is feasible if the following conditions are satisfied:
\begin{enumerate}
    \item $0 \leq q_{k}(R) \leq Q^{T}, \forall k=0,1,...,r+1$
    \item $q_{k} \geq e^{T}_{(i_{k-1}, i_{k}, p_{k-1})}, \forall k=0,1,...,r$
\end{enumerate}

These conditions indicate that whenever it aims to traverse an arc, the remaining energy of the truck should be no less than the required energy of the arc. 

Let $\mathscr{R}$ be the index set of all the feasible routes of EVTSPD-FF in $\mathscr{G}$. Thus, each feasible route consists of an ordered sequence of operations and combined legs (see their definitions in Section 3). Each such route $r \in \mathscr{R}$ is characterized by coefficients $a_{ir}$ that indicate the number of times customer $i \in C$ is visited in route $r$ and by $d_{r}$ that represents the total time cost of route $r$. Let $\lambda_{r}$ be the binary decision variable that takes the value 1 if and only if route $r$ is in the solution. The EVTSPD-FF could be modeled as the following set partitioning problem:
\hfill \break
\begin{align}
[SP] \hspace{0.5cm}  \min \quad & \sum_{r \in \mathscr{R}} d_{r}\lambda_{r} \label{sp_obj}\\
s.t. & \sum_{r \in \mathscr{R}}\lambda_{r} =1 \label{sp_con1}\\
& \sum_{r \in \mathscr{R}} a_{ir}\lambda_{r} = 1, \forall i \in C \label{sp_con2}\\
& \lambda_{r} \in \{0,1\}, \forall r \in \mathscr{R} \label{sp_con3}
\end{align}
The objective function (\ref{sp_obj}) aims at minimizing the duration of the selected route. Constraint (\ref{sp_con1}) indicates the only one of such route can be selected.  Constraints (\ref{sp_con2}) guarantees each customer be visited exactly once and constraint (\ref{sp_con3}) is the domain constraint for decision variables.\par

Solving the formulation SP via a commercial solver would be prohibitive as there is an exponential number of feasible routes in the set $\mathscr{R}$. The most common approach to solving SP is to resort to the column generation (CG) or branch-and-price algorithm and generate feasible routes dynamically. By replacing the full set $\mathscr{R}$ with the dynamically computed restricted route set $\mathscr{R}^{i}$, where $i$ is the index of the iterations, the linear relaxation of SP formulation is called the master problem, or in short, MP. The process of finding a possible route and adding it to $\mathscr{R}^{i}$ is called a pricing problem. The basic idea of CG or BP is to find any basic feasible solutions in the pricing problem using the dual variable of the master problem and add such solutions to the restricted route set $\mathscr{R}^{i}$ of the master problem until this process can be no longer continued. By replacing the full set $\mathscr{R}$ with the restricted route set $\mathscr{R}^{i}$ and solving MP, we get a lower bound of the EVTSPD-FF as it is a relaxation of the original problem. Then the branch-and-bound process is carried on to obtain the binary solution of SP and hence the optimal solution of EVTSPD-FF. \par

The critical step of this solution algorithm is to generate basic feasible solutions in the pricing problem, i.e., a route that visits each customer exactly once and satisfies all the drone and truck's related energy capacity constraints. Sadly, finding such a route is as complex as solving the original problem considering all the constraints involved. Thus, a relaxation technique is required to easily generate feasible routes at the cost of increasing the number of iterations.

\subsection{ng-route relaxation}
One of the most efficient route relaxations in solving routing problems and its variants is the ng-route relaxation proposed in \citep{baldacci2011new}. An ng-route is not necessary an elementary route that visits each customer exactly once. In an ng-route, a customer could be visited multiple times, while some may not be visited at all. When a partial ng-route propagates to the next customer, one cannot visit if this customer lies in a dynamically computed set called ng-set. In other words, the ng-route can visit a customer $i$ multiple times if, in between these visits, the route visits another customer $j$ that is far away from $i$.\par

Using the ng-route relaxation in the pricing problem of BP indicates that some ng-route routes that are not necessarily elementary are obtained in each iteration, which would be added back into the restricted route set $\mathscr{R}^{i}$ of the master problem. Solving the master problem of SP enables us to get a lower bound of the original problem. Compared to solving the pricing problem exactly without using any relaxation technique, ng-route relaxation would result in a "bad" route which might lead to deteriorated lower bounding in each iteration of BP. However, the drawback of using this relaxation is not significant because most ng-route is typically cost-inefficient and unattractive and thus unlikely to appear in the MP solution, and only valuable ng-route would appear in the MP solution. Meanwhile, it is much easier to find such least-reduced-cost ng-routes than finding the least-reduced-cost elementary EVTSPD-FF route in the pricing problem, as few state variables are needed, and some strong dominance rules could be applied to reduce the state that needs to be checked. In general, adopting ng-route relaxation can lead to a more efficient algorithm. The rest of this section will illustrate how this technique solves EVTSPD-FF. \par

For each customer $i \in C$, define the ng-set of customer $i$ as the set that contains all the neighbors of customer $i$, i.e., all the customers near $i$. This ng-set indicates all the customers who cannot visit between two consecutive visit customer $i$. The size of the ng-set determines the propagating direction of ng-routes. A larger ng-set indicates greater sub-tour allowed and better lower bounding from solving the master problem. Meanwhile, finding a least-cost ng-route with a greater ng-set size is more complicated. On the other hand, a small ng-set would result in an easy-to-solve pricing problem but may lead to a relatively worse lower bound. Based on the research conducted in \citep{roberti2021exact}, setting the size of ng-set as 5 is a good trade-off between the quality of bounding and the difficulty of solving the pricing problem. This research tested three different ng-set sizes and reported their computational results in the numerical analysis section. \par

\subsection{Dynamic programming procedure to solve pricing problem}
One critical step in the branch-and-price algorithm is to obtain the least-reduced-cost route given the dual information of the master problem solution. As mentioned above, the ng-route relaxation technique is used so that the non-elementary route is allowed to enter the restricted route set $\mathscr{R}^{i}$ of the master problem. As a result, in the pricing problem of the BP algorithm, we aim to find a least-reduced-cost ng-route. This goal is accomplished by the dynamic programming (DP) algorithm introduced in this section. This algorithm combines elements of the solution method proposed in \citep{roberti2021exact} and \citep{andelmin2017exact} with necessary modifications made to fit our purpose. The difference between these three DP algorithms will be discussed later in this section.\par

The problem description section demonstrates how an EVTSPD-FF solution route can be constructed to concatenate operations and combined legs. Remember that each such operation and combined leg serve their own customers. We define a \textit{feasible} ng-route as a route whose total number of customer visits across all operations and combined legs is $c$ where $c$ is the total number of customers in the instance while satisfying the drone and truck's flight/driving energy limit constraints. The DP recursion used in this research is based on the idea of propagating the current partial route to the next available customer until the total customer visits equal to $c$, and then the route returns to the destination depot. Such route is constructed in the multigraph $\mathscr{G}$  and can propagate to the next customer by three possible actions: adding a truck arc, a drone leg, or a combined arc. By deploying these three actions, all the feasible ng-routes could be generated, and the least-cost route could be identified and added to $\mathscr{R}^{i}$. In this process, some strong dominance rules could be used to eliminate unpromising and dominated routes and thus facilitate the procedure. \par

Let $u_{0} \in \mathbb{R}$ and $u_{i} \in \mathbb{R}, i \in N$ be the dual variables associated with constraints (\ref{sp_con1}) and (\ref{sp_con2}), respectively of the linear relaxation of SP formulation. Thus, in the pricing problem, the objective function is $d_{r} - u_{o} - \sum_{i \in N}a_{ir}u_{i}$ where $d_{r}$ is the route duration of route $r$ and $a_{ir}$ is a parameter indicating the number of times customer $i$ is visited in route $r$. By solving the pricing problem, we aim to find an ng-route with the least objective value. In DP, we define a function 
$$f(ng, k, i^{C}, i^{T}, \tau, b^{C}, b^{T}, Isbcfixed)$$ as the representation of a partial ng-route. The elements of this function are described as follows:

\begin{itemize}
    \item $ng$ is the ng-set of the route, which indicates the set of customers that cannot be visited in the next propagation. It is a subset of customer set and also a subset of $N_{i^{T}}$, ng-set of customer $i^{T}$.
    \item $k$ represents the number of customer visits performed so far. When $k$ equals $c$, we either terminate the route or force the route to return to the depot.
    \item $i^{C} \in N$ is the last combined node visited by the truck in the partial route.
    \item $i^{T} \in N$ is the last customer node visited by the truck (regardless of whether the drone is on-board or not) in the partial route.
    \item $\tau \geq 0$ represents the truck's travel time since the last time the truck visits $i^{C}$.
    \item $b^{C}$ represents the remaining driving energy of the truck at node $i^{C}$.
    \item $b^{T}$ represents the remaining driving energy of the truck at node $i^{T}$.
    \item $Isbcfixed$ is a binary indicator that represents whether $b^{c}$ is a fixed value when adding a truck arc. If the truck visit some CS nodes between $i^{C}$ and $i^{T}$, then when adding another truck arc, the value $b^{c}$ will not change and $Isbcfixed = True$. If no CS nodes exist between $i^{C}$ and $i^{T}$, then $b^{C} = b^{T}$ and $Isbcfixed = False$. 
\end{itemize}
\par
As we can see, we need to keep track of the remaining driving energy of the truck at both $i^{C}$ and $i^{T}$. If $i^{C} = i^{T}$, it means that the partial route finishes an operation or traverses a combined arc. In this situation, $b^{C}$ equals $b^{T}$ and represents the maximum driving energy without charging. When $i^{C} \neq i^{T}$, it means that the route has an ongoing operation and $b^{C} \neq b^{T}$. If the route visits next customer $j$ by truck while traversing arc $(i^{T}, j, p) \in \mathscr{G}$, we need to check if remaining energy at $i^{T}$ is sufficient to traverse this arc, that is, check if $b^{T} \geq e^{T}_{(i^{T}, j, p)}$. Otherwise, if the route aims to visit customer $j$ by drone, it indicates that a drone leg $i^{C}-j-i^{T}$ would be performed. Because of the shared energy assumption, we need to check if $b^{C} \geq e^{D}_{i^{C}j} + e^{D}_{ji^{T}}$. Meanwhile, the binary parameter $Isbcfixed$ helps us to keep track of the value of $b^{c}$.  \par

The value of $f(ng, k, i^{C}, i^{T}, \tau, b^{C}, b^{T}, Isbcfixed)$ represents the minimal duration of any partial route that starts from the depot and can be characterized by the tuple $(ng, k, i^{C}, i^{T}, \tau, b^{C}, b^{T}, Isbcfixed)$. \par

In the beginning of the DP, the route is initialized as $f(\emptyset, 0, 0, 0, 0, Q, Q, False) = -u_{0}$. In the propagation process, given the current state $f(ng, k, i^{C}, i^{T}, \tau, b^{C}, b^{T}, Isbcfixed)$, there are three different actions can be taken based on the current partial route state:
\begin{enumerate}
\setlength\itemsep{1em}
    \item Add truck arc: the current partial route will visit the next customer $j \in N / (ng \cup \{i^{T}, i^{C}\})$ by truck only. However, this action can be taken if and only if the following conditions are satisfied:
    \begin{itemize}
        \item There exist an arc $(i^{T}, j, p)$ in $\mathscr{G}$ to connect $i^{T}$ and $j$.
        \item $b^{T} \geq c_{1}(i^{T}, j, p)$, where $c_{1}(i^{T}, j, p)$ is the driving energy requirement for traversing arc $(i^{T}, j, p)$ in $\mathscr{G}$.
    \end{itemize}
     If customer $j$ can be reached by adding a truck arc $(i^{T},j,p)$, the state function for the new partial route is $$f((ng \cup \{i^{T}\}) \cap N_{j}, k+1, j, i^{C}, \tau+c(i^{T},j,p), b^{C'}, b^{T'}, newIsbcfixed)$$ where
     
     \begin{itemize}
         \item $(ng \cup \{i^{T}\}) \cap N_{j}$ is newly the updated ng-set.
         \item $k+1$ is the new customer visit, $j$ is the new $i^{T}$, $i^{C}$ remains the same, $\tau+c(i^{T},j,p)$ is the newly updated travel time since the truck last visit to $i^{C}$.
         \item $b^{C'}$ is the new remaining energy at $i^{C}$. It equals to $b^{C}$ if $Isbcfixed$ is True and $b^{C}-c(i^{T},j,p)$ if $Isbcfixed$ is False.
         \item  $b^{T'}$ is the new remaining driving energy at $i^{T}$ and it equals to $b^{T}-c(i^{T},j,p)$ if $p = 0$ and $e^{r}_{(i^{T},j,p)}$ otherwise.
         \item $newIsbcfixed$ is the new indicator of whether $b^{C'}$ can be further changed. It is True if $p=0$ and False if $p \neq 0$.
     \end{itemize}
    After the propagation, the new value for the state is $f(ng, k, i^{C}, i^{T}, \tau, b^{C}, b^{T}, Isbcfixed) - u_{j}$.

    \item Add combined arc: if $i^{T} = i^{C}$ and $\tau =0$, it means that the current partial route just finishes an operation or combined arc. In this situation, the current partial route can visit the next customer $j \in N \textbackslash (ng \cup \{i^{T}, i^{C}\}$ by adding a combined arc,  if and only if the following conditions are satisfied:
    \begin{itemize}
        \item There exist an arc in $\mathscr{G}$ to connect $i^{T}$ and $j$.
        \item $b^{T} \geq c_{1}(i^{T}, j, p)$, where $c_{1}(i^{T}, j, p)$ is the driving energy requirement for traversing arc $(i^{T}, j, p)$ in $\mathscr{G}$.
    \end{itemize}
    Similarly, if customer $j$ can be reached by adding a combined arc $(i^{T},j,p)$, the state function for the new partial route is $$f((ng \cup \{i^{T}\}) \cap N_{j}, k+1, j, j, 0, b^{C'}, b^{T'}, False)$$ where 
    \begin{enumerate}
         \item $(ng \cup \{i^{T}\}) \cap N_{j}$ is newly the updated ng-set.
         \item $k+1$ is the new customer visit, $j$ is the new $i^{T}$ and $i^{C}$, the newly updated travel time since the truck last visit to $i^{C}$ is still zero.
         \item $b^{C'}$ is the new remaining energy at $i^{C}$. It equals to $b^{C}$ if $b^{C}-c(i^{T},j,p)$ if $p = 0$ and $e^{r}_{(i^{T},j,p)}$ otherwise.
         \item  $b^{T'}$ is the new remaining driving energy at $i^{T}$ and it equals to $b^{T}-c(i^{T},j,p)$ if $p = 0$ and $e^{r}_{(i^{T},j,p)}$ otherwise.
         \item The new indicator of whether $b^{C'}$ is fixed is False. 
     \end{enumerate}
    
     The value for the new state  is $f(ng, k, i^{C}, i^{T}, \tau, b^{C}, b^{T}, False) - u_{j} + c(i^{T},j,p)$ where $c(i^{T},j,p)$ is the time cost of arc $(i^{T},j,p)$ in $\mathscr{G}$.

    \item Add drone leg: if $i^{T} \neq i^{C}$ and consequently $\tau \neq 0$, the current partial route has an ongoing unfinished operation. In this situation, the current partial route can visit the next customer $j \in N \textbackslash (ng \cup \{i^{T}, i^{C}\}$ by adding a drone sortie $(i^{C}, j, i^{T})$ which finishes the current operation. This action can only be taken if and only if the following conditions are satisfied: 
    \begin{itemize}
        \item There exist arcs $(i^{C}, j, 0)$ and $(j, i^{T}, 0)$ in $\mathscr{G}$ such that $e^{D}_{(i^{C}, j, 0)} + e^{D}_{(j, i^{T}, 0)} \leq Q^{d}$, which requires that the sortie respect the drone flight energy capacity constraint. 
        \item $e^{D}_{(i^{C}, j, 0)} + e^{D}_{(j, i^{T}, 0)} \leq b^{C}$ because of the shared energy assumption. This condition ensures that when the drone is launched at node $i^{C}$, the extra remaining energy at $i^{C}$ should be no less than the required energy of the sortie.
    \end{itemize}
    If the drone leg is added successfully, the state function for the new partial route is $$f((ng \cup \{i^{T}, i^{C}\}) \cap N_{j}, k+1, i^{T}, i^{T}, 0, b^{T}, b^{T}, False).$$ The value for this new state is $f(ng, k, i^{C}, i^{T}, 0, b^{C}, b^{T}, Isbcfinal) + max\{\tau, t^{D}_{(i^{C}, j, 0)} + t^{D}_{(j, i^{T}, 0)}\} - u_{j}$, where $max\{\tau, t^{D}_{(i^{C}, j, 0)} + t^{D}_{(j, i^{T}, 0)}\}$ is the time cost for the new operation. 
\end{enumerate}
\vspace{1em}

\begin{figure}
    \centering
    \includegraphics[width = .8\textwidth]{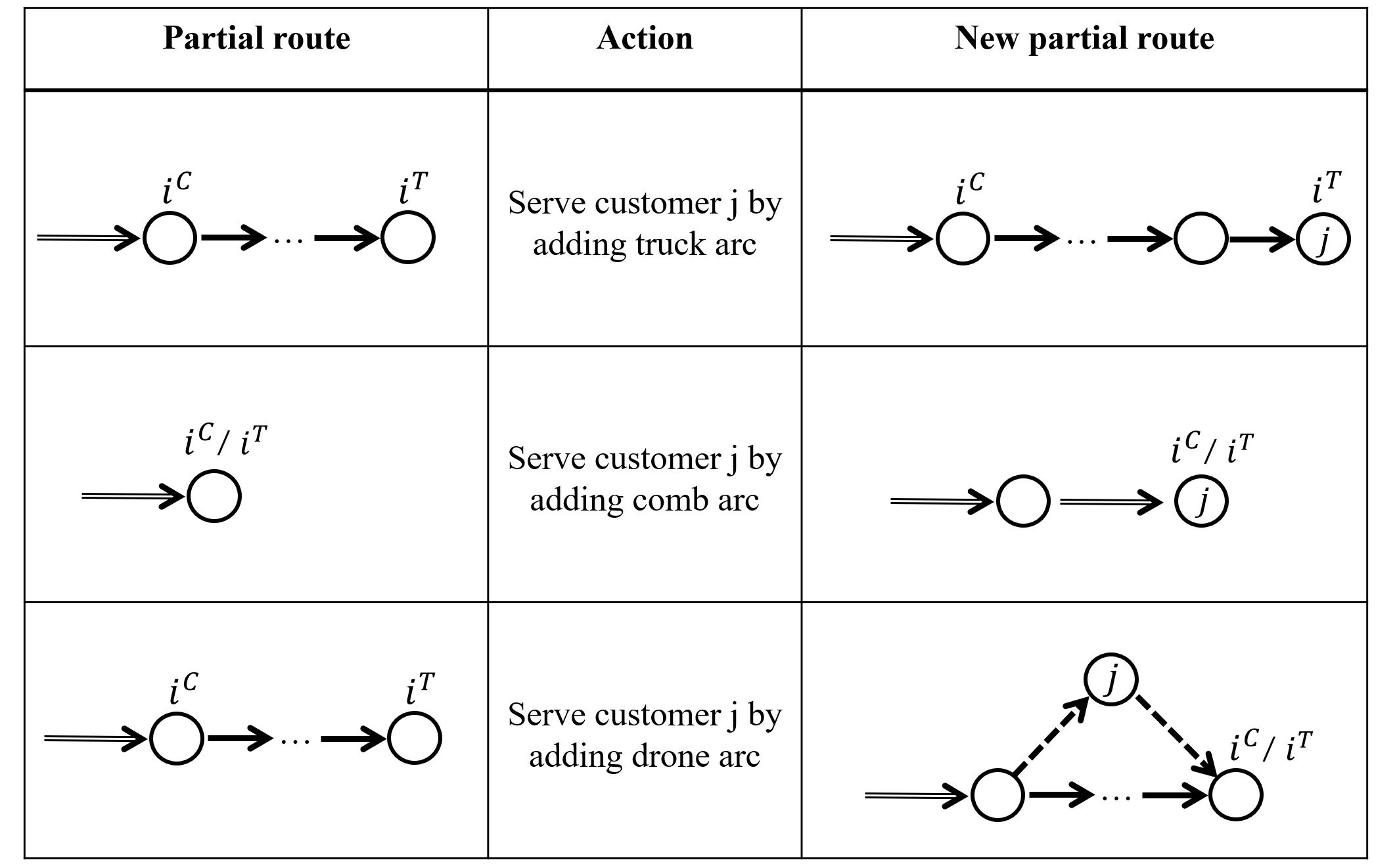}
    \caption{A representation of the three actions in DP}
    \label{fig:DP_actions}
\end{figure}

A simple example is shown in Figure \ref{fig:DP_actions} to illustrate the three actions and their effect on the partial route introduced above. Note that in this figure, a double line arrow represents a combined arc and a single line arrow represents a truck arc. The three actions are deployed at every partial node to progress to the next customer node until the partial route visits exact $n$ customers. If the partial route has to visit $n$ customers, we also need to check if the route has returned to the depot, and an extra arc needs to be added if necessary. In this way, by exploring all the possible partial routes, we obtain an ng-route with the least reduced cost given the dual information of the master problem.\par

However, the number of possible states still grows exponentially with the instance size, especially considering that multiple arcs exist between two nodes in the multigraph, and three possible actions exist. Thus, the following dominance rules are used to limit the number of states in the algorithm. During the algorithm process, these two dominance rules are constantly called to check if the new state dominates or is dominated by others. 

\begin{theorem}
Let $f(ng_{1}, k, i^{C}, i^{T}, \tau, b^{C}_{1}, b^{T}_{1}, Isbcfinal) = f^{1}$ and $f(ng_{2}, k, i^{C}, i^{T}, \tau, b^{C}_{2}, b^{T}_{2}, Isbcfinal) = f^{2}$. If the following conditions are satisfied:

\begin{subequations}
    \begin{align}
        f^{1} \leq f^{2} \\
        i^{C} = i^{T} \\
        ng_{1} \subseteq ng_{2} \\
        b^{C}_{1} \geq b^{C}_{2} \\
        b^{T}_{1} \geq b^{T}_{2} 
    \end{align}
\end{subequations}
Then $f(ng_{1}, k, i^{C}, i^{T}, \tau, b^{C}_{1}, b^{T}_{1}, Isbcfinal)$ dominates $f(ng_{2}, k, i^{C}, i^{T}, \tau, b^{C}_{2}, b^{T}_{2}, Isbcfinal)$
\end{theorem}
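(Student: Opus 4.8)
The plan is to prove the statement as a standard completion-based dominance argument for the dynamic program. Write $\sigma$ for an arbitrary sequence of admissible actions (truck arcs, combined arcs and drone legs, exactly as in the three propagation rules above) that extends the dominated state $f(ng_{2},k,i^{C},i^{T},\tau,b^{C}_{2},b^{T}_{2},Isbcfixed)$ into a complete feasible ng-route: one that visits all $c$ customers and returns to the depot. I would apply the \emph{identical} sequence $\sigma$ to the dominating state $f(ng_{1},k,i^{C},i^{T},\tau,b^{C}_{1},b^{T}_{1},Isbcfixed)$ and show two things: (a) every action of $\sigma$ stays admissible, so the dominating state also completes; and (b) the resulting route costs no more than the route produced from the dominated state. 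Since both states start from the same $k$, they must visit the same remaining customers, so it suffices to reuse $\sigma$ verbatim.

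For part (a) I would argue that the componentwise relations of the hypothesis are \emph{invariants} preserved by every action. The inclusion $ng_{1}\subseteq ng_{2}$ persists because each update has the form $(ng\cup\{i^{T}\})\cap N_{j}$ or $(ng\cup\{i^{T},i^{C}\})\cap N_{j}$, and both union and intersection respect set inclusion; hence any customer $j$ admissible from the dominated state (so $j\notin ng_{2}$) is admissible from the dominating state as well. The energy relations $b^{C}_{1}\ge b^{C}_{2}$ and $b^{T}_{1}\ge b^{T}_{2}$ persist arc by arc: on a direct arc ($p=0$) both truck energies drop by the same $c(i^{T},j,p)$, while on a refuel arc ($p\ne 0$) both are reset to the same fixed value $e^{r}_{(i^{T},j,p)}$; the conditional update of $b^{C}$ is driven by the flag $Isbcfixed$, which evolves as a deterministic function of $p$ and therefore takes the same value in both runs at every step, so the ordering of $b^{C}$ is never broken. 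Because each admissibility test is monotone increasing in available energy—$b^{T}\ge c_{1}(i^{T},j,p)$ for truck and combined arcs, and $e^{D}_{(i^{C},j,0)}+e^{D}_{(j,i^{T},0)}\le b^{C}$ together with the capacity bound $\le Q^{D}$ for drone legs—every action feasible from the dominated state is feasible from the dominating one, including the final return arc to the depot, which is common to both runs.

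For part (b) the key observation is that the accumulated truck time $\tau$ since the last combined node evolves identically in the two runs: it grows by the common $c(i^{T},j,p)$ on a truck arc and resets to $0$ on a combined arc or drone leg, so it stays equal throughout. Consequently each per-action cost increment—$-u_{j}$ for a truck arc, $c(i^{T},j,p)-u_{j}$ for a combined arc, and $\max\{\tau,\,t^{D}_{(i^{C},j,0)}+t^{D}_{(j,i^{T},0)}\}-u_{j}$ for a drone leg—depends only on data that coincide in the two runs and is independent of $b^{C}$ and $b^{T}$. The two completed routes therefore differ in total cost by exactly $f^{1}-f^{2}\le 0$, which is the assertion. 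The hypothesis $i^{C}=i^{T}$ is what makes this comparison clean: it guarantees no drone sortie is mid-flight at the compared state (so $b^{C}=b^{T}$ there and the continuation begins from a well-defined configuration), rather than forcing $\sigma$ to open with a drone leg.

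The main obstacle I anticipate is the bookkeeping around refuel arcs and the $Isbcfixed$ flag. A refuel arc collapses the energy inequality into an equality by resetting to $e^{r}$, and $b^{C}$ is updated conditionally on $Isbcfixed$; I must therefore verify rigorously that the flag carries the same value in both runs at every step—which follows from its being a function of the traversed arc alone—so that the conditional $b^{C}$-updates never invert the ordering $b^{C}_{1}\ge b^{C}_{2}$. Once that invariance is pinned down, the remaining obligations are routine monotonicity and additivity checks, and the dominance follows.
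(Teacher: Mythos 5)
Your proposal is correct and follows essentially the same approach as the paper: the paper's own proof is a one-sentence assertion that any continuation of the dominated state can be replicated from the dominating state at no greater cost, which is exactly the completion-based argument you develop. Your write-up simply makes explicit the invariance and monotonicity checks (ng-set inclusion, energy ordering, the $Isbcfixed$ flag, and cost additivity) that the paper leaves implicit.
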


\begin{proof}
If all the five above conditions are satisfied,  all the potential states that could be achieved by propagating $f(ng_{2}, k, i^{C}, i^{T}, \tau, b^{C}_{2}, b^{T}_{2}, Isbcfinal)$ can also be achieved by propagating $f(ng_{1}, k, i^{C}, i^{T}, \tau, b^{C}_{1}, b^{T}_{1}, Isbcfinal)$ with a lower cost. Thus the dominance rule stands.
\end{proof}

\begin{theorem}
Let $f(ng_{1}, k, i^{C}, i^{T}, \tau, b^{C}_{1}, b^{T}_{1}, Isbcfinal) = f^{1}$ and $f(ng_{2}, k, i^{C}, i^{T}, \tau, b^{C}_{2}, b^{T}_{2}, Isbcfinal) = f^{2}$. If the following conditions are satisfied:
\begin{subequations}\label{equ:domin3}
    \begin{align}
        f^{1} \leq f^{2}  \label{domin3_1} \\ 
        i^{C} \neq i^{T} \label{domin3_2} \\
        ng_{1} \subseteq ng_{2} \label{domin3_31} \\
        b^{C}_{1} \geq b^{C}_{2} \label{domin3_4} \\
        b^{T}_{1} \geq b^{T}_{2} \label{domin3_5} \\
        f^{1}+\tau_{1} \leq f^{2}+\tau_{2} \label{domin3_6}
    \end{align}
\end{subequations}
Then $f(ng_{1}, k, i^{C}, i^{T}, \tau, b^{C}_{1}, b^{T}_{1}, Isbcfinal)$ dominates $f(ng_{2}, k, i^{C}, i^{T}, \tau, b^{C}_{2}, b^{T}_{2}, Isbcfinal)$
\end{theorem}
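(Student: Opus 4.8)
The plan is to prove the dominance by a coupling argument: I will show that every feasible completion of the partial route encoded by $f^{2}$ — that is, every admissible sequence of DP actions extending it to a route that visits all $c$ customers and returns to the depot — can be replayed verbatim from $f^{1}$, producing a feasible completion whose total reduced cost is no larger. I would run this as an induction on the number $c-k$ of customer visits still to be performed. Because $i^{C}\neq i^{T}$ here, the current state has an unfinished operation, so only two of the three DP actions are available: adding a truck arc that extends the operation, or adding a drone leg that closes it (the combined-arc action is barred, since it requires $i^{C}=i^{T}$ and $\tau=0$). The induction step therefore reduces to checking these two actions.

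First I would treat the \textbf{truck-arc} action to some next customer $j$. If $f^{2}$ may traverse $(i^{T},j,p)$, then $j\notin ng_{2}\cup\{i^{C},i^{T}\}$ together with $ng_{1}\subseteq ng_{2}$ gives $j\notin ng_{1}\cup\{i^{C},i^{T}\}$, and the energy test transfers because $b^{T}_{1}\ge b^{T}_{2}\ge c_{1}(i^{T},j,p)$; hence $f^{1}$ can take the identical arc. The successor updates are then identical for both states: the same $-u_{j}$ is added to each objective value, the same constant $c(i^{T},j,p)$ is added to each $\tau$, the new $i^{C}$ is unchanged while the new $i^{T}=j\neq i^{C}$, and the $b^{C},b^{T}$ updates (determined solely by $p$ and the common $Isbcfixed$) preserve $b^{C}_{1}\ge b^{C}_{2}$ and $b^{T}_{1}\ge b^{T}_{2}$. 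Consequently the ng-inclusion survives intersection with $N_{j}$, while both $f^{1}\le f^{2}$ and $f^{1}+\tau_{1}\le f^{2}+\tau_{2}$ persist because equal increments are added to each side; the successor pair again satisfies the hypotheses of this theorem, and the induction hypothesis applies.

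The \textbf{drone-leg} action that closes the operation via the sortie $(i^{C},j,i^{T})$ is the crux. Feasibility transfers cleanly: the capacity test $e^{D}_{(i^{C},j,0)}+e^{D}_{(j,i^{T},0)}\le Q^{D}$ is state-independent, and the shared-energy test $e^{D}_{(i^{C},j,0)}+e^{D}_{(j,i^{T},0)}\le b^{C}$ holds for $f^{1}$ whenever it holds for $f^{2}$ since $b^{C}_{1}\ge b^{C}_{2}$. Both successors land in the regime $i^{C}=i^{T}$ with $\tau=0$ and $b^{C}=b^{T}$ equal to the old $b^{T}$, so $b^{T}_{1}\ge b^{T}_{2}$ and $ng_{1}\subseteq ng_{2}$ pass to the successors — exactly the hypotheses of the preceding dominance rule (the $i^{C}=i^{T}$ case), which I may invoke to continue the induction. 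The one genuinely nonroutine point is the incremental cost $\max\{\tau,d\}-u_{j}$, where $d=t^{D}_{(i^{C},j,0)}+t^{D}_{(j,i^{T},0)}$ is the sortie duration: I must verify $f^{1}+\max\{\tau_{1},d\}\le f^{2}+\max\{\tau_{2},d\}$. I would establish this for every $d\ge 0$ by a short case split on the position of $d$ relative to $\tau_{1}$ and $\tau_{2}$ — when $d$ exceeds both, the maxima equal $d$ and the hypothesis $f^{1}\le f^{2}$ suffices; when $d$ lies below both, the maxima equal the $\tau$'s and the hypothesis $f^{1}+\tau_{1}\le f^{2}+\tau_{2}$ suffices; and in each mixed range one of these two, combined with the inequality defining that range, closes the gap.

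The main obstacle is precisely the handling of this $\max$, and it explains why this theorem carries the extra hypothesis $f^{1}+\tau_{1}\le f^{2}+\tau_{2}$ that the $i^{C}=i^{T}$ rule does not: the cost of finishing an ongoing operation is the nonlinear envelope $d\mapsto f+\max\{\tau,d\}$ rather than a fixed increment, and $d$ is unknown at the moment of the dominance check, since it depends on which customer eventually closes the sortie and which arcs are used. The dominance must therefore hold uniformly in $d$, and no single bound on $f$ can guarantee it; the two linear bounds, on $f$ and on $f+\tau$, together pin the whole piecewise-linear envelope of $f^{1}$ beneath that of $f^{2}$. With that elementary lemma in hand, the remaining pieces are routine: the base case at $k=c$ returns to the depot along any arc $f^{2}$ uses — feasible for $f^{1}$ because $b^{T}_{1}\ge b^{T}_{2}$, and cheaper because $f^{1}\le f^{2}$ — and the truck-arc bookkeeping above carries the two linear invariants forward unchanged.
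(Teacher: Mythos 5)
Your proof is correct and rests on the same crux as the paper's: the three-case analysis showing $f^{1}+\max\{\tau_{1},d\}\le f^{2}+\max\{\tau_{2},d\}$ for every possible sortie duration $d$, invoking $f^{1}\le f^{2}$ when $d$ dominates both truck times, invoking $f^{1}+\tau_{1}\le f^{2}+\tau_{2}$ when the truck times dominate, and combining one of these with the range inequality in the mixed cases. The only difference is packaging: the paper concatenates the state with an arbitrary reverse (backward) partial ng-route closed by a drone leg and compares the two full-route reduced costs in one shot, whereas you replay completions action-by-action, carry the two linear invariants through intermediate truck arcs, and hand off to the preceding ($i^{C}=i^{T}$) dominance rule once the operation closes --- the same argument rendered as an explicit induction.
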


\begin{proof}
Define a reverse ng-route that starts with the destination depot while back propagating to origin depot. Thus we could also define a function $f(ng_{b}, n-k, i^{C}_{b}, i^{T}, \tau_{b}, b^{C}_{b}, b^{T}, Isbcfinal) = f^{b}$ as the representation of this reverse partial ng-route. In this way, a full ng-route could be achieved by combining $f(ng, k, i^{C}, i^{T}, \tau, b^{C}, b^{T}, Isbcfinal)$ with $f(ng_{b}, n-k, i^{C}_{b}, i^{T}, \tau_{b}, b^{C}_{b}, b^{T}, Isbcfinal)$.\par

If all the above conditions are satisfied, all the potential propagation that could be achieved by propagating $f(ng_{2}, k, i^{C}, i^{T}, \tau, b^{C}_{2}, b^{T}_{2}, Isbcfinal)$ can also be achieved by propagating $f(ng_{1}, k, i^{C}, i^{T}, \tau, b^{C}_{1}, b^{T}_{1}, Isbcfinal)$. \par

By combining $f^{1}$ with backward partial route, we can derive a full ng-route. Assuming a route performs drone leg $i^{C}-j-i^{C}_{b}$ where $j$ is the drone node, denote $t_{cb} = t^{D}_{i^{C}j} + t^{D}_{ji^{C}_{b}} + f_{j}$ as the needed time for drone to finish the this drone leg. Then the reduced cost of the full route is:
$$\hat{d_{1}} = f_{1}+f^{b}+u_{i^{T}}-u_{j}+max\{\tau_{1}+\tau_{b}, t_{cb}\} $$
Similarly, the reduced cost of the full route by combining $f^{2}$ with $f^{b}$ is 
$$\hat{d_{2}} = f_{2}+f^{b}+u_{i^{T}}-u_{j}+max\{\tau_{2}+\tau_{b}, t_{cb}\} $$

If all the conditions from (\ref{domin3_1}) to (\ref{domin3_6}) are satisfied, we need to prove that $\hat{d_{1}} \leq \hat{d_{2}}$. This can be illustrated in three cases:

\begin{itemize}
    \item If $t_{cb} \geq max\{\tau_{1}+\tau_{b}, \tau_{2}+\tau_{b}\}$, then $\hat{d_{1}} = f_{1}+f^{b}+u_{i^{T}}-u_{j}+ t_{cb}$ and $\hat{d_{2}} = f_{2}+f^{b}+u_{i^{T}}-u_{j}+ t_{cb}$. Thus $\hat{d_{1}} \leq \hat{d_{2}}$ because of condition (\ref{domin3_1}).
    \item If $t_{cb} \leq min\{\tau_{1}+\tau_{b}, \tau_{2}+\tau_{b}\}$, then $\hat{d_{1}} = f_{1}+f^{b}+u_{i^{T}}-u_{j}+ \tau_{1}+\tau_{b}$ and $\hat{d_{2}} = f_{2}+f^{b}+u_{i^{T}}-u_{j}+\tau_{2}+\tau_{b}$. Thus $\hat{d_{1}} \leq \hat{d_{2}}$ because of condition (\ref{domin3_6}).
    \item If $\tau_{1}+\tau_{b} \leq t_{cb} \leq \tau_{2}+\tau_{b}$, then  $\hat{d_{1}} = f_{1}+f^{b}+u_{i^{T}}-u_{j}+ t_{cb}$ and $\hat{d_{2}} = f_{2}+f^{b}+u_{i^{T}}-u_{j}+ \tau_{2}+\tau_{b}$. Thus $\hat{d_{1}} \leq \hat{d_{2}}$ because of condition (\ref{domin3_1}).
\end{itemize}

Thus, $\hat{d_{1}} \leq \hat{d_{2}}$ in all the scenarios and the dominance rule stands.

\end{proof}

\subsection{Considering additional side constraints in dynamic programming}

\subsubsection{Launch and retrieve times}
The launch and retrieve time assumption can be incorporated in the DP model by changing the state value calculation to account for the launch and retrieve times. When adding a truck arc, the new state value is $f(ng, k, i^{C}, i^{T}, \tau, b^{C}, b^{T}, Isbcfixed) - u_{j} + S_{L}$. When adding a drone leg, the new state value is $f(ng, k, i^{C}, i^{T}, \tau, b^{C}, b^{T}, False) - u_{j} + c(i^{T},j,p) + S_{R}$.

\subsubsection{Maximum number of customers per truck leg}
In dynamic programming, we can add a truck arc when $i^{T} != i^{C}$ and there is enough energy to traverse the next truck arc. When the maximum number of customers per truck leg assumption is enforced, we need to check an additional condition, that is, the current number of customers in the current truck leg, to ensure a feasible route. We need an extra element to record this value when constructing states in DP to do so. Define a new state variable $n^{T}$ indicating the current number of customers visited in the current truck leg. In this way, the new state is
$$f(ng, k, i^{C}, i^{T}, \tau, b^{C}, b^{T}, Isbcfixed, n^{T}).$$\par

Note that it might seem non-trivial to add a state variable in DP as this would lead to expanded state space. However, in practice, when the upper bound $\Bar{n}$ is small (e.g. $\Bar{n} \leq 3$), the influence of the new state variable is minimal. Besides, as $\Bar{n}$ places an upper bound of the number of consecutive "add truck arc" actions, the propagation process and hence the overall computation process is shortened, as suggested in the numerical analysis section.

\subsubsection{Weight-dependent drone flying range}
The weight-dependent drone flying range assumption can be enforced in DP by changing the original constraint $e^{D}_{(i^{C}, j, 0)} + e^{D}_{(j, i^{T}, 0)} \leq Q^{d}$ when adding drone leg to the new constraint $f^{D}_{(i^{C}, j, 0)} + f^{D}_{(j, i^{T}, 0)} \leq Q^{d}$ where  $f^{D}_{(i, j, p)}$ is the energy consumption function for drones when traversing arc $(i,j,p)$.

\subsubsection{Loops}
When loop is allowed in the operation process, in DP, an extra action of adding loop is needed. For current state $f(ng, k, i^{C}, i^{T}, \tau, b^{C}, b^{T}, Isbcfixed)$, the \textit{add loop} action is only available when $i^{C} = i^{T}, \tau=0, Isbcfixed=False, b^{C}=b^{T}$, which indicates that both the truck and the drone are at node $i^{C}$ at current state, and $b^{C} \geq e^{D}_{i^{T}j}+e^{D}_{ji^{T}}$ which indicates that the remaining energy is sufficient for drone to perform sortie $\textlangle i^{T}, j, i^{T} \textrangle$. After the \textit{add loop} action, the resulting new state is $f((ng \cup \{i^{T}, i^{C}\}) \cap N_{j}, k+1, i^{T}, i^{T}, 0, b^{T}, b^{T}, False)$ and the new state value is $f(ng, k, i^{C}, i^{T}, 0, b^{C}, b^{T}, Isbcfinal) + t^{D}_{(i^{C}, j, 0)} + t^{D}_{(j, i^{T}, 0)} - u_{j}$.\par

\color{black}

\subsection{Branching strategy}
With the ng-route relaxation technique, the columns (routes) entered into the master problem of BP might not be elementary routes. As a result, the final solution of the master problem might be fractional. If this situation happens, a branch-and-bound procedure is carried out to obtain a feasible integer solution, and thus a branching strategy guiding this process is needed. For a node in the B\&B tree, given an optimal solution $\lambda^{*}$ of the master problem of SP formulation and its corresponding column set (ng-route set), we follow the routine introduced in \citep{roberti2021exact} and perform a binary search based on three different types of decisions: (1)$y_{i}^{*}, i \in N$, which represents the times each customer $i$ is served by the drone alone in the optimal solution, (2) $x_{ijp}^{*T}, (i,j,p) \in \mathscr{G}$, which represents the times each arc $(i,j,p)$ is traversed by the truck only in the optimal solution, and (3) $w_{ij}^{*D}, (i,j) \in G$, which represents the times each arc $(i,j)$ is traversed by the drone only in the optimal solution. Note that the value of $y_{i}^{*}, x_{ijp}^{*T}, w_{ij}^{*D}$ should all lie in the interval $[0,1]$ because of constraints (\ref{sp_con3}). \par

With the three kinds of optimal solution values, we can perform branching based on these values in a hierarchical manner. First of all, we check if $y_{i}^{*}, i \in N$ is binary for each customer $i$. If there exists some customer whose $y_{i}^{*}$ value is not binary, we then find the one whose value is closest to 0.5, that is, $i^{*} = argmin_{i \in C}|y_{i}^{*}-0.5|$ and branch on this customer. If all the values of $y_{i}^{*}$ are binary, we can move to the values of $x_{ijp}^{*T}, (i,j,p) \in \mathscr{G}$ and similarly branch on the truck arc whose value is closest to 0.5, that is, $(i,j,p)^{*} = argmin_{(i,j,p) \in \mathscr{G}}|x_{ijp}^{*T}-0.5|$. If all the values of $x_{ijp}^{*T}$ are either zero or one, we then move to the last layer of branching decisions $w_{ij}^{*D}, (i,j) \in G$ and branch on the drone arc whose value is the closest to 0.5, that is, $(i,j)^{*} = argmin_{(i,j) \in G} |w_{ij}^{*D}-0.5|$. Note that we end up with two child nodes with corresponding branching decision values at zero or one for each of these branching decisions. After performing all three layers of branching procedures, we can obtain all the feasible integral solutions to the original EVTSPD-FF and pick the least-cost one as the optimal solution. To increase the efficiency of this branching process, one can apply a depth-first-search method to obtain an integral solution as soon as possible so that it can be used as an upper bound to prune the remaining nodes in the later process. \par 

As the solution EVTSPD-FF solely consists of truck arcs and drone legs, it seems unnecessary to branch on the value of $y_{i}^{*}, i \in C$. However, as explained in \citep{roberti2021exact}, only branching on $x_{ijp}^{*T}$ and $w_{ij}^{*D}$ might lead to an unbalanced search tree as it is more restricting to set the value of a specific $x_{ijp}^{*T}$ or $w_{ij}^{*D}$ to one than setting the value to zero. Besides, based on the numerical results conducted in this paper, we find that for most small-sized cases, merely branching on $y_{i}^{*}, i \in C$ is sufficient to obtain an integral solution. For these reasons, the first layer of the branching decision is on $y_{i}^{*}, i \in C$.

\subsection{Variable neighbourhood search heuristic}
VNS scheme is a classical meta-heuristic that aims to escape local optima by changing the way of selecting neighbourhood structure during the process. This method is first proposed in \citep{mladenovic1997variable} and later improved in \citep{hansen2001variable}. Over the past few decades, this approach has been successfully applied in solving multiple problems, especially in the field of optimization. In the VRP community, it can be used to solve different variant of the problem such as TSP, VRP with time window and VRP with multi-depot. \citep{campuzano2021multi} compare the performance of VNS with the MILP formulation proposed in \citep{Cavani2021} to solve TSPD. Thus, in this section, we also present a similar VNS scheme to solve EVTSPD-FF instance of larger size. The general framework of VNS is shown in Algorithm \ref{alg: vns}.\par 

\begin{algorithm}
\caption{Variable Neighbourhood Search}\label{alg: vns}
\hspace*{\algorithmicindent} \textbf{Input:} EVTSPD-FF network $G$\\
\hspace*{\algorithmicindent} \textbf{Output:} EVTSPD-FF solution $S^{*}$
\begin{algorithmic}[1]
\State \textbf{Initialization:}
\State S $\gets GetIniSol()$
\State $S_{i}, S^{'}_{i}, S^{*}_{i} \gets MakeFly()$       \Comment{Improve the solution}
\State $S_{i}, S^{'}_{i}, S^{*}_{i} \gets GainFeasibility()$  \Comment{Make the solution feasible by inserting charging station}
\State $f(S^{*}) \gets \infty$
\State $i, p \gets 0$
\State \textbf{Iteration:}
\While{$i< MaxIteration$ or $p < MaxStopping$}
    \State $l \gets 0$
    \While{$f(S_{i}) \geq f(S^{'})$ and $l \leq l_{m}$}
        \State $S_{i} \gets N_{l}(S_{i})$  \Comment{Select neighbourhood structure based on $l$}
        \State $l \gets l+1$
        \If{$f(S_{i}) < f(S_{i}^{'})$}
            \If{$f(S_{i}) < f(S_{i}^{*})$}
                \State $S^{*} \gets S_{i}$
                \State $p \gets 0$   \Comment{If find a global best solution then set $p$ to zero}
            \EndIf
        \State $l \gets 0$
        \Else 
        \State $S_{i} \gets Shaking(S_{i})$ \Comment{Aims to obtain an alternative searching point}
        \State $p \gets p+1$
        \EndIf
        \State $S^{'}_{i+1}, S_{i+1} \gets S_{i}$ 
        \State $i \gets i+1$ 
    \EndWhile
\EndWhile
\State \textbf{return}  $S^{*}$ 
\end{algorithmic}
\end{algorithm}

\subsubsection{Initial solution}
Unlike TSPD where any permutation of the customers nodes is a feasible route, in EVTSPD-FF, finding an feasible initial solution is non-trivial, especially when the EV's energy constraints are tight. A simple initial solution of EVTSPD-FF is using the EV to serve all the customers and ignore the drone completely. However, it is still non-trivial to obtain a feasible EV route. In the past literature, there exist various ways of finding a feasible solution. Some examples are the Modified Clarke-Wright (MCWS) algorithm proposed in \citep{Erdogan2012}, dynamic programming approach introduced in BP algorithm and PseudoGreedy algorithm, a greedy method proposed in \citep{felipe2014heuristic}. In this research, the MCWS method is used to generate a feasible EV route in VNS due to its simplicity.

\subsubsection{Neighbourhood structure}
As a critical component of the VNS algorithm, the neighborhood structure is used to escape the local optima so that the algorithm can explore the new space in the feasible region. The performance and effectiveness of the neighborhood operator directly affect the algorithm's overall performance. In this research, we use a variety of neighborhood operators whose performance has been tested in the past literature. All the used operators used in this research are shown in Table \ref{tab:VNS_operator}:

\begin{table}[H]
\begin{center}
\caption{A list of key neighbourhood structures used in VNS} \label{tab:VNS_operator}
\begin{tabular}{ m{2.5cm} m{9cm} m{4cm} } 
\toprule
\textbf{Name} & \textbf{Description}  & \textbf{Reference} \\
\hline
MakeFly & Let drone serve a customer based on saving  & \citep{Agatz2018}  \\
PushLeft & Move launch node of  drone sortie to left & \citep{Agatz2018}  \\
PushRight & Move retrieve node of drone sortie to right & \citep{Agatz2018} \\
TwoOpt & Choose two arcs in truck's route and reconnect them & \citep{de2020variable}  \\
Exchange 1.1 & Swap location of two nodes in EV's route & \citep{de2020variable}  \\
Exchange 2.1 & Swap location of two nodes with another node & \citep{de2020variable}  \\
Exchange 2.2 & Swap location of two nodes with other two nodes & \citep{de2020variable}  \\
RelocateCustomer & Let drone serve as many customers as possible & \citep{de2020variable} \\
Exchange 3.1 & Swap location of three consecutive nodes with another node & \citep{campuzano2021multi}  \\
Exchange 3.2 & Swap location of three consecutive nodes with other two node & \citep{campuzano2021multi}  \\
ReinsertCS & Delete one random CS node from EV's route and insert another one & This paper  \\
\bottomrule
\end{tabular}
\end{center}
\end{table}

\subsubsection{Maintaining solution feasibility}
As mentioned previously, the major complication of EVTSPD-FF compared to its TSPD counterpart is that it is difficult for EVTSPD-FF to maintain the solution's feasibility during the search process. In VNS, a saving-based greedy method is used to insert charging stations into found solutions so that its feasibility can be maintained.  \par

Note that maintaining solution feasibility is a costly process in VNS. Checking the solution's feasibility is of computational complexity $\mathcal{O}(C_{t})$ where $C_{t}$ is the number of nodes in the current EV's route, while inserting one CS node into the current solution is of computational complexity $\mathcal{O}(C_{t}|S|)$ where $|S|$ is the number of CS nodes in the network. Meanwhile, most of the neighborhood structure operator requires at most quadratic time $\mathcal{O}(C_{t}^{2})$. This indicates that, compared to TSPD, for each new solution found in the searching process, the number of operations needed to maintain the solution's feasibility is at least doubled. In practice, the increased complexity is even more significant because the inserting CS function might be called several times to obtain a feasible solution.

\section{Numerical Analysis}
In this section, the effectiveness of the proposed MILP formulation, the efficiency of the exact BP method and the performance of the VNS heuristic are all compared and tested. Then, a real world case study is conducted on the downtown Austin network to demonstrate the practicality of the proposed model.

\subsection{Experimental setting and implementation configuration}
While benchmark instances are available for drone related routing problems \citep{Agatz2018}, without the location of charging station nodes, these instances are not guaranteed to be feasible in the case of EVTSPD-FF with randomly generated charging stations. Thus, in this paper, we conduct a numerical analysis on randomly generated instances with realistic parameter setting. \par

In this test, for all the generated instances, the single depot is located at $(0, 0)$, and the coordinates of the customer nodes and the charging station nodes are uniformly distributed between -20 km and 20 km. The number of customers $|C|$ is an input parameter while the number of charging station nodes is roughly a third of $|C|$. The distance matrix for the electric vehicle is calculated using the Manhattan metric, while the Euclidean metric is used for the UAV to reflect its greater mobility options. For each randomly generated instance, the MCWS algorithm is used to solve the instance as electrical vehicle TSP to guarantee the instance is feasible for EVTSPD-FF. \par

In this test, we assume the electric van is Alke model ATX340E and the drone is DJI MATRICE 600 PRO. The parameter setting associated with the electric van and drone are mostly adopted from the company's official website \citep{AlkeVan, DJI}. As per the Federal Aviation Administration's regulation, the maximum allowed altitude is 400 feet (122 meters) above ground level. In this research, the drone's cruise altitude is estimated to be 50 meters. The drone's launch time is estimated to be the cruise altitude divided by the taking off vertical speed. In the table, the launch time also includes the time need for swapping battery and reloading parcel, which indicate that the launch time is greater than the retrieve time.

\begin{table}[H]
\begin{center}
\caption{A list of key parameters used in tests} \label{tab:test_para}
\begin{tabular}{ m{10cm} m{3cm} } 
\toprule
\textbf{Parameter}  & \textbf{Value} \\
\hline
\textbf{For Network:} &  \\
Number of customers $|C|$ & dedicated by user  \\
Number of charging stations $|S|$ & $\geq 3$ \\
\\
\textbf{For EV (Alke TX340E with 10kWh battery):} & \\
Driving speed & 40 km/h \\
Maximum driving range (time) & 100 km (2.5 h) \\
Charging time at CS node & 2 min \\
\\
\textbf{For drone (DJI MATRICE 600 PRO):} & \\
Flying speed & 60 km/h \\
Maximum flight range (time) & 20 km (20 minutes) \\
Drone-EV energy consumption rate ratio & 0.4 \\
Payload capacity & 6 kg \\
Launch time & 100 s \\
Retrieve time & 20 s \\
\bottomrule
\end{tabular}
\end{center}
\end{table}

The proposed branch-and-price algorithm and ALNS are coded in Python, while the MILP formulation is implemented in Pyomo and solved with ILOG's CPLEX Concert Technology solver (version 12.6.3). All experiments are run on a 3.6 GHz Intel Core i7 desktop with 32 GB RAM.\par

\subsection{Comparison between MILP formulation and BCP algorithm}
We first compare the performance between the two exact solution methods: solving the MILP formulation with commercial solver (CPLEX) and the proposed BP method. The computational results are shown in Table \ref{tab:VNS_performance1}, which is a summary for the average test results for instances with different size ($|C| = 5,6,7,8,9,10$) and different drone-truck travel speed ratio ($\alpha = 1.5, 2, 3$). In the table, Column \textit{No.ins} reports the number of instances tested. \textit{Arc} represents the result from the arc-based MILP model. As mentioned before, BP with three different ng-set sizes is implemented. In the column \textit{Opt}, the value indicates the number of instances solved by different approaches. Note that the exact optimal solution is not available for large instances ($i \geq 8$). In this case, the solution with the lowest total cost is considered the optimal solution among all the returned solutions of all the four solution methods. Column \textit{RunningTime} reveals the average operational time of different approaches over the solved instances, which is measured in seconds. Column \textit{BB nodes} presents the number of branch-and-bound nodes searched to obtain the optimal solution. The last column, \textit{GapRoot} reveals the average gap at the root node between the optimal solution cost and the lower bound returned by the root node. Note that only the instance that is solved by the corresponding approach is considered for each approach. \par

\begin{landscape}
\begin{table}[H]
\begin{center}
\caption{Performance comparison between MILP and BP} \label{tab:VNS_performance1}
\begin{tabular}{m{0.4cm} m{0.4cm} m{0.4cm} m{0.6cm} m{0.9cm} m{0.9cm} m{0.9cm} m{0.9cm} m{0.9cm} m{0.9cm} m{0.9cm} m{0.9cm} m{0.9cm} m{0.9cm}  m{0.9cm} m{0.9cm} m{0.9cm} m{0.9cm}}
\toprule
\multicolumn{1}{l}{} & \multicolumn{1}{l}{} & \multicolumn{1}{l}{} & \multicolumn{1}{l}{} & \multicolumn{4}{c}{Opt} & \multicolumn{4}{c}{RunningTime} & \multicolumn{3}{c}{BBNodes} & \multicolumn{3}{c}{GapRoot(\%)} \\
\cline{5-8}   \cline{9-12}    \cline{13-15}  \cline{16-18} \\
$|C|$ & $|S|$ & $\alpha$ & No.ins  & Arc & BP-5 & BP-6 & BP-7  & Arc & BP-5 & BP-6 & BP-7 & BP-5 & BP-6 & BP-7 & BP-5 & BP-6 & BP-7 \\
\hline
5 & 3 & 1.5 & 20 & 20 & 20 & N/A & N/A  & 0.50 & 0.40 & N/A & N/A & 1.00 & N/A & N/A & 0.00 & N/A & N/A \\
 & 3 & 2 & 20 & 20 & 20  & N/A & N/A  & 0.50 & 0.40 & N/A & N/A & 1.00 & N/A & N/A & 0.00 & N/A & N/A \\
 & 3 & 3 & 20 & 20 & 20  & N/A & N/A  & 0.60 & 0.40 & N/A & N/A & 1.00 & N/A & N/A & 0.00 & N/A & N/A \\
6 & 3 & 1.5 & 20 & 20  & 20 & 20 & N/A  & 7.80 & 11.30 & 2.20 & N/A & 1.40 & 1.00 & N/A & 1.40 & 0.00 & N/A \\
 & 3 & 2 & 20 & 20 &  20 & 20 & N/A  & 7.70 & 11.50 & 2.30 & N/A & 1.40 & 1.00 & N/A & 1.40 & 0.00 & N/A \\
 & 3 & 3 & 20 & 20 &  20 & 20 & N/A  & 7.70 & 11.40 & 2.40 & N/A & 1.40 & 1.00 & N/A & 1.40 & 0.00 & N/A \\
7 & 3 & 1.5 & 20  & 20 & 18 & 18 & 19  & 81.00 & 96.60 & 148.40 & 18.00 & 2.60 & 1.50 & 1.00 & 2.50 & 1.33 & 0.00 \\
 & 3 & 2 & 20  & 20 & 18 & 18 & 19  & 80.00 & 96.00 & 148.10 & 18.30 & 2.60 & 1.50 & 1.00 & 2.50 & 1.33 & 0.00 \\
 & 3 & 3 & 20  & 20 & 18 & 18 & 19  & 82.00 & 98.30 & 148.60 & 18.20 & 2.60 & 1.50 & 1.00 & 2.50 & 1.33 & 0.00 \\
8 & 3 & 1.5 & 20  & 20 & 12 & 12 & 13 & 540.75 & 8.63 & 1.73 & 421.73 & 3.23 & 1.70 & 1.10 & 3.18 & 1.51 & 0.34 \\
 & 3 & 2 & 20  & 20 & 12 & 12 & 13  & 540.75 & 8.55 & 1.80 & 423.00 & 3.23 & 1.70 & 1.10 & 3.18 & 1.51 & 0.37 \\
 & 3 & 3 & 20  & 20 & 12 & 12 & 13  & 540.75 & 72.45 & 111.30 & 13.50 & 2.90 & 1.70 & 1.10 & 3.18 & 1.51 & 0.37 \\
9 & 3 & 1.5 & 10  & N/A & 5 & 5 & 5  & >3600 & 887.46 & 1103.00 & 2407.00 & 2.50 & 4.14 & 2.00 & 2.68 & 4.79 & 0.82 \\
 & 3 & 2 & 10  & N/A & 5 & 5 & 5  & >3600 & 887.00 & 1123.46 & 2464.15 & 3.00 & 4.14 & 2.00 & 2.68 & 4.08 & 0.78 \\
 & 3 & 3 & 10  & N/A & 5 & 5 & 5  & >3600 & 856.00 & 1135.56 & 2469.47 & 3.00 & 4.14 & 2.00 & 2.68 & 4.07 & 0.52 \\
10 & 3 & 1.5 & 10  & N/A & 3 & 3 & 3  & >3600 & 2122.00 & 2891.00 & 3249.00 & 9.00 & 8.60 & 8.00 & 25.76 & 15.73 & 9.36 \\
 & 3 & 2 & 10  & N/A & 3 & 3 & 3 & >3600 & 2140.60 & 2846.20 & 3248.50 & 9.00 & 8.60 & 8.00 & 25.76 & 15.73 & 9.36 \\
 & 3 & 3 & 10  & N/A & 3 & 3 & 3 & >3600 & 2110.40 & 2865.10 & 3200.40 & 9.00 & 8.60 & 8.00 & 25.76 & 15.73 & 9.36 \\
\bottomrule
\end{tabular}
\end{center}
\end{table}
\end{landscape}

As can be seen from Table \ref{tab:VNS_performance1}, it is obvious that the branch-and-price algorithm is more efficient than the proposed MILP models defined on the multigraph, regardless of the value of the parameter $\alpha$. Specifically, the arc-based model can solve instances containing 8 customers while the BP method can solve instances containing up to 10 customers in one hour. \par

As we mentioned earlier, BP-5 is the most efficient in solving the pricing problem among all three BP methods, while BP-7 can obtain the best lower bounding at the root node. This property is also demonstrated by the results in Table \ref{tab:VNS_performance1}. According to the table, BP-7 solves the most cases among the three. This is mainly because BP-7 needs to examine fewer nodes before termination in the branch-and-bound procedure by obtaining a better lower bound at the root node. This result is also illustrated in the "\textit{BB nodes}" column, as the average BB nodes for BP-7 is much lower than the other two methods. \par

Besides, comparing the performance of the three BP methods, it seems that when $i \leq 7$, BP-$i$ is the most efficient method and BP-5 is more efficient than the other two methods when $i \geq 8$. In the first case, when $i \leq 7$, the branch-and-bound tree for method BP-$i$ only contains the root node, which indicates that the problem could be solved in one iteration by solving the root node. However, when $i \geq 8$, all three BP methods need to examine other branch-and-bound nodes other than the root node to terminate the algorithm. As the results suggest, as the number of customers increases, the average BB nodes in the BP method increase accordingly, from less than 3 when $i \leq 7$ to around 8 when $i = 10$. Meanwhile, when $i \geq 8$, the average computational time for BP-5 is much lower than the other two methods, as it is simpler to solve the pricing problem for BP-5 than BP-6 and BP-7. \par

Furthermore, the test results with various drone-truck travel speed ratio values indicate that the value of parameter $\alpha$ has little impact on the overall computational efficiency of all three solution methods. This is likely because the value of $\alpha$ only affects the number of feasible drone sorties in the instances whose impact on the total number of decision variables is minimal. 

Besides, we also conduct additional tests on the performance of MILP model and BP5 method on some commonly seen variants of EVTSPD-FF. The following EVTSPD-FF variants are considered:
\begin{itemize}
    \item \textit{Base}: the base case of EVTSPD-FF without any side constraints
    \item \textit{LRT}: the EVTSPD-FF variant considers the launch/retrieve time, whose value are shown in Table \ref{tab:test_para}
    \item \textit{Range}: the EVTSPD-FF variant which assumes the drone's flight range is dependent on the weight of parcel it carries. In this research, we adopts a simplified assumption that its flight range is doubled when no parcel is on boarded and that the range decreases linearly with the weight of the parcel until it reaches the base range value with full-capacity payload.
    \item \textit{Loop}: the EVTSPD-FF variant which allows the self-loop
\end{itemize}

The computational results are shown in Table \ref{tab:VNS_performance2}, where the unit for each value is second. It can been seen that the BP method is very reliable in handling additional side constraints as it is fairly simple to consider these extra side constraints in dynamic programming propagation. For the MILP model, the computational time of maximum leg variant and self-loop variant is about 35\% higher than normal EVTSPD-FF. 

\begin{table}[H]
\begin{center}
\caption{Performance comparison between MILP and BP} \label{tab:VNS_performance2}
\begin{tabular}{m{0.4cm} m{0.4cm} m{0.4cm} m{0.6cm} m{0.9cm} m{0.9cm} m{0.9cm} m{0.9cm} m{0.9cm} m{0.9cm} m{0.9cm} m{0.9cm} m{0.9cm} m{0.9cm}}
\toprule
\multicolumn{1}{l}{} &
  \multicolumn{1}{l}{} &
  \multicolumn{1}{l}{} &
  \multicolumn{1}{l}{} &
  \multicolumn{2}{c}{\textbf{Base}} &
  \multicolumn{2}{c}{\textbf{LRT}} &
  \multicolumn{2}{c}{\textbf{Range}} &
  \multicolumn{2}{c}{\textbf{MaxLeg}} &
  \multicolumn{2}{c}{\textbf{Loop}} \\
\cline{5-6}   \cline{7-8}    \cline{9-10}  \cline{11-12} \cline{13-14}\\
$|C|$ & $|S|$ & $\alpha$ & No.ins & Arc   & BP5    & Arc   & BP5    & Arc   & BP5    & Arc   & BP5    & Arc   & BP5\\
\hline 
5 & 3    & 1.5    & 10     & 0.80  & 0.70   & 0.80  & 0.70   & 0.80  & 0.70   & 1.20  & 0.70   & 1.3   & 0.70    \\
  & 3    & 2      & 10     & 0.80  & 0.70   & 0.80  & 0.70   & 0.80  & 0.70   & 1.20  & 0.70   & 1.3   & 0.70    \\
  & 3    & 3      & 10     & 0.80  & 0.60   & 0.80  & 0.60   & 0.80  & 0.70   & 1.20  & 0.70   & 1.3   & 0.80    \\
6 & 3    & 1.5    & 10     & 7.90  & 11.10  & 7.90  & 11.10  & 7.90  & 11.10  & 10.68 & 11.10  & 11.2  & 11.70   \\
  & 3    & 2      & 10     & 7.90  & 12.10  & 7.90  & 11.00  & 7.90  & 10.90  & 10.68 & 11.50  & 11.2  & 12.10   \\
  & 3    & 3      & 10     & 7.90  & 11.00  & 7.90  & 11.10  & 7.90  & 11.10  & 10.68 & 11.60  & 11.2  & 12.10   \\
7 & 3    & 1.5    & 10     & 70.8  & 95.4   & 70.8  & 96.4   & 70.8  & 96.4   & 100.5 & 95.4   & 105.3 & 94.4    \\
  & 3    & 2      & 10     & 70.8  & 97.4   & 70.8  & 97.1   & 70.8  & 95.9   & 100.5 & 96.5   & 105.3 & 93.9    \\
  & 3    & 3      & 10     & 70.8  & 97.8   & 70.8  & 98.9   & 70.8  & 96.1   & 100.5 & 96.9   & 105.3 & 95.4    \\
8 & 3    & 1.5    & 10     & 634.3 & 433.4  & 634.3 & 433.4  & 634.3 & 433.4  & 856.1 & 435.5  & 891.7 & 430.20  \\
  & 3    & 2      & 10     & 634.3 & 433.9  & 634.3 & 432.4  & 634.3 & 433.4  & 856.1 & 437.4  & 891.7 & 430.10  \\
  & 3    & 3      & 10     & 634.3 & 435.4  & 634.3 & 436.4  & 634.3 & 433.4  & 856.1 & 436.4  & 891.7 & 430.50  \\
9 & 3    & 1.5    & 10     & >3600 & 1849.5 & >3600 & 1849.5 & >3600 & 1849.5 & >3600 & 1757.5 & >3600 & 1768.00 \\
  & 3    & 2      & 10     & >3600 & 1816.5 & >3600 & 1875.5 & >3600 & 1868.5 & >3600 & 1757.5 & >3600 & 1778.00 \\
  & 3    & 3      & 10     & >3600 & 1829.5 & >3600 & 1849.5 & >3600 & 1871.5 & >3600 & 1757.5 & >3600 & 1798.00 \\
\bottomrule
\end{tabular}
\end{center}
\end{table}

\newpage
\subsection{Performance evaluation of VNS}
In this section, we analyze the performance of the proposed variable neighourhood search heuristic and compare it with the exact branch-and-cut method. The parameter setting of the tested instances are the same as previously described. The test results are shown in Table \ref{tab:VNS_performance}. In the table, the unit for the final cost columns is 10 seconds and the optimality gap of VNS is calculated as the difference between final results of two methods divided by the optimal cost.\par

As can be seen from the table, the performance of VNS is reasonably acceptable as the optimality gap is less than 3.5\% for instances containing less than 8 customers and 3 charging stations while the running time of VNS is significantly less than that of BP. For instances with more than 10 customers, the BP fails to terminate in 1 hour while the VNS could solve instances with 25 customers in about 1 minute. Although we expect the optimality gap of VNS to grow as the size of the instances increases, the current test results indicate the VNS is competitive to exact solution methods.

\begin{table}[H]
\begin{center}
\caption{Performance comparison between VNS and BP} \label{tab:VNS_performance}
\begin{tabular}{m{0.5cm} m{0.5cm} m{0.5cm} m{0.6cm} m{1cm} m{1cm} m{1cm} m{1cm} m{1cm} m{1cm}}
\toprule
  & & & & 
\multicolumn{2}{c}{\textbf{Final Cost}} & 
\multicolumn{2}{c}{\textbf{Running Time (s)}} & 
\multicolumn{2}{c}{\textbf{Gap (\%)}} \\
\cline{5-6}   \cline{7-8}    \cline{9-10} \\
$|C|$ & $|S|$ & $\alpha$ & No.ins & BP   & VNS    & BP   & VNS   & BP   & VNS\\
\hline 
5 & 3 & 1.5 & 20 & 990.00  & 990.80  & 3.10   & 0.60  & 0.00 & 0.08 \\
  & 3 & 2   & 20 & 905.20  & 905.20  & 3.00   & 0.60  & 0.00 & 0.00 \\
  & 3 & 2.5 & 20 & 765.50  & 765.50  & 3.00   & 0.60  & 0.00 & 0.00 \\
6  & 3 & 1.5 & 20 & 1089.40 & 1108.50 & 36.20  & 1.10   & 0.00 & 1.75 \\
   & 3 & 2   & 20 & 1015.80 & 1020.10 & 36.10  & 1.10  & 0.00 & 0.42 \\
   & 3 & 2.5 & 20 & 857.70  & 865.70  & 37.20  & 1.10  & 0.00 & 0.93 \\
7  & 3 & 1.5 & 20 & 1172.00  & 1212.00  & 365.00 & 2.00 & 0.00 & 3.41 \\
    & 3 & 2   & 20 & 1010.50 & 1042.20 & 366.30 & 1.94  & 0.00 & 3.14 \\
    & 3 & 2.5 & 20 & 794.50  & 812.20  & 367.50 & 1.92  & 0.00 & 2.23 \\
10  & 4 & 1.5 & 20 & N/A     & 1453.80 & N/A    & 5.86  & N/A  & N/A \\
    & 4 & 2   & 20 & N/A     & 1364.40 & N/A    & 5.74  & N/A  & N/A \\
    & 4 & 2.5 & 20 & N/A     & 1258.20 & N/A    & 5.68  & N/A  & N/A   \\
15  & 6 & 1.5 & 10 & N/A     & 1694.80 & N/A    & 18.42 & N/A  & N/A   \\
    & 6 & 2   & 10 & N/A     & 1637.80 & N/A    & 17.99 & N/A  & N/A   \\
    & 6 & 2.5 & 10 & N/A     & 1584.40 & N/A    & 17.84 & N/A  & N/A   \\
20  & 8 & 1.5 & 10 & N/A     & 1851.20 & N/A    & 35.86 & N/A  & N/A   \\
    & 8 & 2   & 10 & N/A     & 1746.40 & N/A    & 35.01 & N/A  & N/A   \\
    & 8 & 2.5 & 10 & N/A     & 1636.80 & N/A    & 35.04 & N/A  & N/A  \\
25 & 8 & 1.5 & 10 & N/A     & 2168.75 & N/A    & 68.70 & N/A  & N/A    \\
   & 8 & 2   & 10 & N/A     & 2015.34  & N/A    & 67.90 & N/A  & N/A   \\
   & 8 & 2.5 & 10 & N/A     & 1954.76 & N/A    & 69.20 & N/A  & N/A    \\          
\bottomrule
\end{tabular}
\end{center}
\end{table}

\newpage
\subsection{Real-world case study}
In this section, a real-world case study is conducted to illustrate the effectiveness of the proposed algorithm in solving EVTSPD-FF with practical size. This study also analyzes the effects of several key parameters on the final delivery time. In this case study, the downtown Austin network is analyzed. A total of 35 nodes are chosen randomly from the network, most of which are the centroids of ZIP Code Tabulated Areas (ZCTAs) in Austin metro area, while ensuring that the resulting EVTSPD-FF problem is feasible. Out of these 35 nodes, 10 nodes are selected as the charging stations and the remaining 25 nodes are considered as customer nodes or depot. Two different layouts are test. In the first layout we pick a node that lies in the central area of the network as the depot while in the second layout the depot is chosen as a node that lies in the suburban district of Austin. These two layouts are named as central layout and side layout, respectively. Thus, the final network contains one depot, 24 customers and 10 charging stations. All customers can be served by either the EV or the UAV. The travel time of the EV from one node to another node is estimated using the Google map Python API for the peak hour of a typical Monday. Thus, the resulting travel time matrix is asymmetric. Besides, the travel time of the UAV from one node to another is calculated as the direct distance between the two nodes divided by the drone's speed, which is 60 km/h. All the other parameters are the same as the previously described. A representation of the network is shown in Figure \ref{fig:AustinNet}. \par

With the default setting, the completion delivery time obtained via the proposed VNS heuristic is 16040 seconds (approximately 4.5 hours), and the final route consists of 14 customers being served by the UAV and ten customers being served by the EV. \par   

\begin{figure}[H]
    \centering
    \includegraphics[width = 0.8\textwidth]{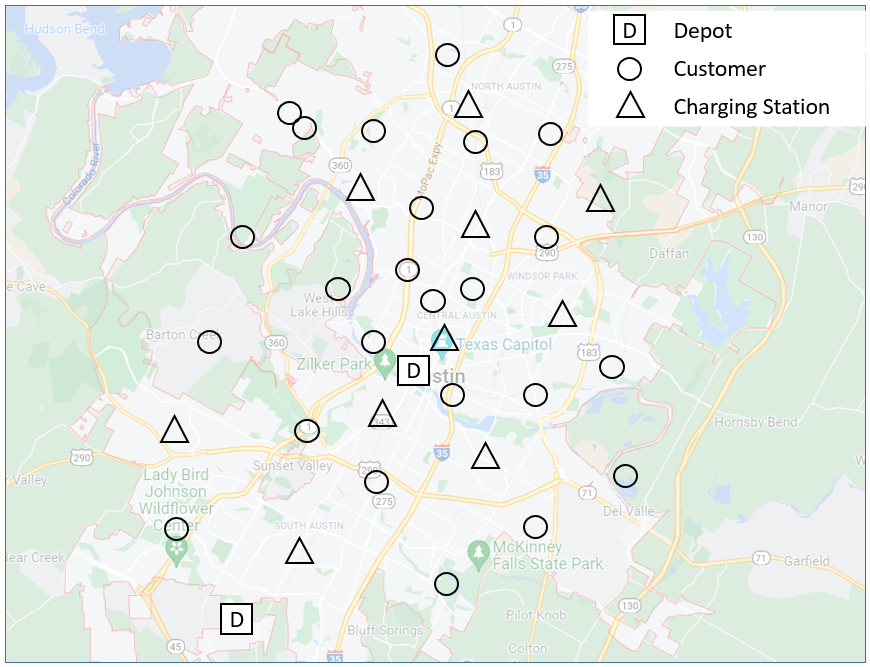}
    \caption{A representation of the Austin network}
    \label{fig:AustinNet}
\end{figure}

In the rest of this section, we study the impact of characteristics of different parameters on the performance of this EV-UAV delivery system, from which we will gain insights into this new mode of transportation. All the results are the best-found solution based on ten independent runs of VNS. 

\subsubsection{UAV speed}
The UAV is typically faster than the EV as it can take shortcuts without being affected by ground traffic congestion. The X-Wing project initiated by Google declares their delivery UAVs can reach the speed of 120 km/h when tested in a suburban area in Australia. This section explores the effect of the drone's travelling speed on the final delivery cost. Note that the default travel speed is 60km/h and three additional speed are tested, namely, 50 km/h, 70 km/h and 80 km/h. The final solution cost of different drone travelling speed is shown in Figure \ref{fig:drone_speed}. 

As shown, the delivery completion time decreases as the travelling speed of the UAV increases, for both layouts. As the UAV speed increases, the total delivery time, the EV travel time, and the total waiting time reduce gradually. For central layout, the final route completion cost decreases about 16.67\% when drone's speed increases from 50 km/h to 80 km/h. For side layout this value about 11.1\%. This result indicates that increasing the UAV speed is an effective approach to reduce the risk and operational cost incurred by the waiting period during the delivery. 

\begin{figure}[H]
    \centering
    \includegraphics[width = 0.8\textwidth]{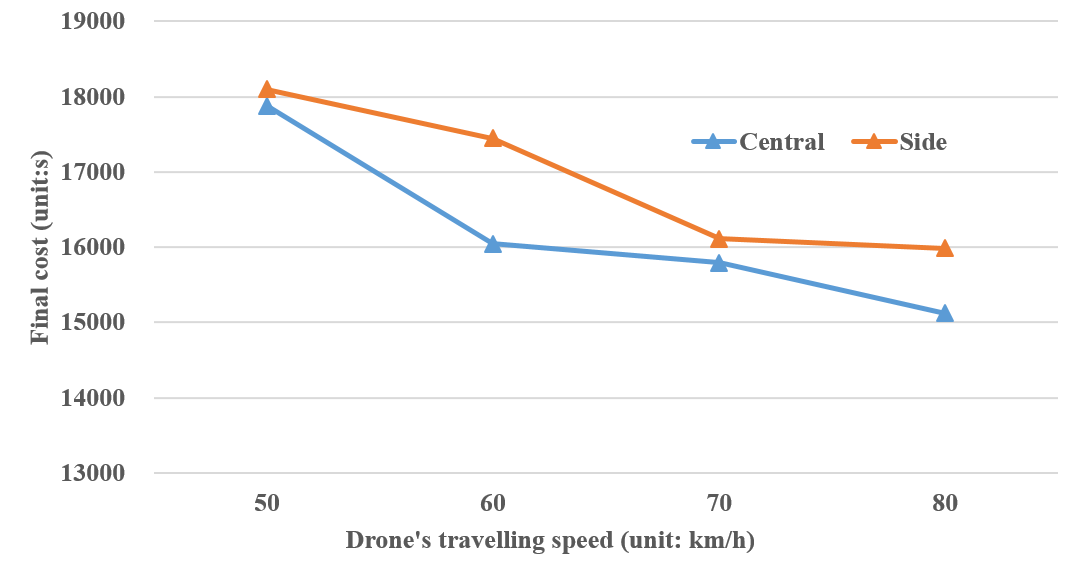}
    \caption{Sensitivity analysis of drone's speed}
    \label{fig:drone_speed}
\end{figure}

\subsubsection{EV's driving range}
The driving range of an EV depends on the size of the battery it carries. However, as battery size increases, the operational and maintenance cost also grows, so logistics companies deploying electric vehicles must make a trade-off between these two factors. In EVTSPD-FF, the driving range of the EV affects the final delivery completion time in that the low driving range indicates EV has to visit charging stations frequently. Tighter energy constraints also make it difficult for the heuristic algorithm to find a "good" feasible solution. Denote the EV's driving range as $Q^{T}$, which is measured in seconds. This section presents the result of how EV's driving range affects the solution route cost, as illustrated in Figure \ref{fig:EV_range}. As presented, for side layout the final delivery time decreases with the EV driving range increases. However, this trend is not obvious for central layout. \par 

\begin{figure}[H]
    \centering
    \includegraphics[width = 0.8\textwidth]{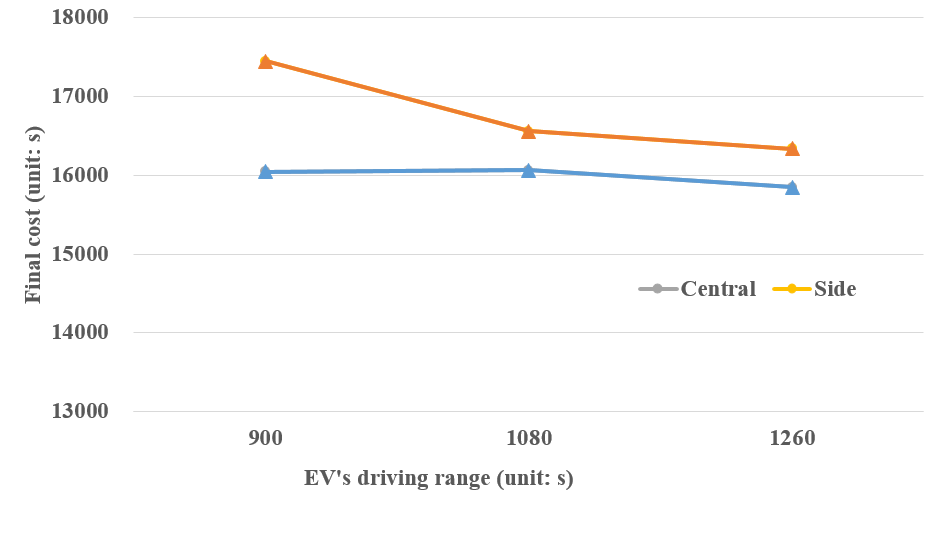}
    \caption{Sensitivity analysis of EV's driving range}
    \label{fig:EV_range}
\end{figure}

\section{Conclusion and future work}
This paper introduces the electric vehicle traveling salesman problem with the drone, a new delivery concept combining electric vehicle and drone. In EVTSPD, the electric vehicle and drone operate in a cooperative way,  where the EV serves as a drone hub that launches and retrieves the drone similar to the FSTSP introduced in \citep{Murray2015}  and TSPD introduced in \citep{Agatz2018}. Meanwhile, the unique features of the EVTSPD, compared to the past research, are two-fold. First, as the electric vehicle is modeled in the problem, the truck needs to visit charging station nodes to avoid depleting its energy. Second, we propose a novel shared energy assumption, which indicates that the EV and drone shared their energy in the operation process. EVTSPD aims to find a coordinated EV-drone route that minimizes total travel time and serves a set of customers while incorporating stops at charging stations in route plans to ensure sufficient charge.\par 

In this paper, the EV is assumed to be fully-charged with fixed time at charging station nodes. We present a novel arc-based mixed integer/linear programming formulation for EVTSPD-FF defined in a constructed multigraph. In this multigraph, all the feasible route between two non-charging station nodes are pre-calculated and stored in the network as an arc between the two nodes. In this way, the dimension of the multigraph is significantly smaller than the original network which needs to be augmented to enable multiple visit to one single charging station. Later in this paper we show that computational efficiency could be achieved by utilizing this multigraph and an exact branch-and-price algorithm is proposed to solve the EVTSPD-FF. In this BP algorithm, a dynamic programming method is used to propagate in the multigraph to obtain a feasible ng-route of the problem. This research also presents a variable neighbourhood search heuristic to solve the problem. \par

In the numerical analysis section, the computational efficiency of the MILP formulation, exact BP algorithm, and the proposed VNS heuristic are tested and compared. The test results indicate that the BP method is much more efficient than solving the MILP model via a commercial solver. The VNS is the fastest of all three method with reasonable optimality gap. Considering the enormous amount of constraints in the problem, the exact method can only be used to solve small-sized instances and for instances containing more than ten customers the VNS is the only viable method. Besides, a real-world case study based on downtown Austin network is performed to show that the VNS could solve the problem with practical size within a reasonable computational time. It also conducts several sensitivity analyses to illustrate how key parameters affect the final integrated route. The test results demonstrate that the UAV speed has a more significant influence on the final delivery time compared to the EV driving range. 

The EVTSPD-FF formulations, along with these solution techniques, will aid organizations with EV fleets in overcoming difficulties resulting from limited recharging infrastructure. The new delivery concept of using UAV and EV to perform last-mile delivery would result in financial and environmental benefits when considering the reduced operation cost of fueling and switching to drone, which does not require a costly human pilot. Besides, the research in the newly-merged approach will provide intuition for the future development of a more sophisticated delivery service. \par

There remain several practical challenges for drone delivery regarding payload capacity, safety, and public acceptance \citep{watkins2020ten}.  It remains to be seen whether battery-swapping stations (like the type we assume) or fast-charging stations ultimately make more economic sense for logistics fleets.  As the number of customers increases, it will become essential to consider a multi-vehicle version of the current problem, perhaps with heterogeneous EV and drone range and capacity or alternative (non full-charge) policies. All of these should be addressed in future research.

\section{Acknowledgements}
This research is based on work supported by the National Science Foundation under Grant No. 1826230, 1562109/1562291, 1562109, 1826337, 1636154, and 1254921. This work is also supported by the Center for Advanced Multimodal Mobility Solutions and Education (CAMMSE).

\bibliography{library}

\begin{thebibliography}{66}
\newcommand{\enquote}[1]{``#1''}
\providecommand{\natexlab}[1]{#1}
\providecommand{\url}[1]{\texttt{#1}}
\providecommand{\urlprefix}{URL }
\expandafter\ifx\csname urlstyle\endcsname\relax
  \providecommand{\doi}[1]{doi:\discretionary{}{}{}#1}\else
  \providecommand{\doi}{doi:\discretionary{}{}{}\begingroup
  \urlstyle{rm}\Url}\fi

\bibitem[{Lin et~al.(2016)Lin, Zhou, and Wolfson}]{lin2016electric}
Lin, J., Zhou, W., and Wolfson, O., \enquote{Electric vehicle routing problem,}
  \emph{Transportation research procedia}, Vol.~12, 2016, pp. 508--521.

\bibitem[{Jennings and Figliozzi(2019)}]{jennings2019study}
Jennings, D., and Figliozzi, M., \enquote{Study of sidewalk autonomous delivery
  robots and their potential impacts on freight efficiency and travel,}
  \emph{Transportation Research Record}, Vol. 2673, No.~6, 2019, pp. 317--326.

\bibitem[{Song et~al.(2018)Song, Park, and Kim}]{song2018persistent}
Song, B.~D., Park, K., and Kim, J., \enquote{Persistent UAV delivery logistics:
  MILP formulation and efficient heuristic,} \emph{Computers \& Industrial
  Engineering}, Vol. 120, 2018, pp. 418--428.

\bibitem[{Roug{\`e}s and Montreuil(2014)}]{rouges2014crowdsourcing}
Roug{\`e}s, J.~F., and Montreuil, B., \enquote{Crowdsourcing delivery: New
  interconnected business models to reinvent delivery,} \emph{1st international
  physical internet conference}, Vol.~1, 2014, pp. 1--19.

\bibitem[{Goodchild and Toy(2018)}]{goodchild2018delivery}
Goodchild, A., and Toy, J., \enquote{Delivery by drone: An evaluation of
  unmanned aerial vehicle technology in reducing CO2 emissions in the delivery
  service industry,} \emph{Transportation Research Part D: Transport and
  Environment}, Vol.~61, 2018, pp. 58--67.

\bibitem[{DroneZon(2019)}]{Dronezon}
DroneZon, \enquote{Drones For Deliveries From Medicine To Post, Packages And
  Pizza,} , 2019.
\newblock
  \urlprefix\url{https://www.dronezon.com/drones-for-good/drone-parcel-pizza-delivery-service/.}

\bibitem[{{Teal Group}(2018)}]{Teal}
{Teal Group}, \enquote{Teal Group Predicts Worldwide Civil Drone Production
  Will Soar Over the Next Decade,} , 2018.
\newblock
  \urlprefix\url{https://www.tealgroup.com/index.php/pages/press-releases/54-teal-group-predicts-worldwide-civil-UAV-production-will-soar-over-the-next-decade.}

\bibitem[{{Insideevs}(2021)}]{Insideevs}
{Insideevs}, \enquote{Rivian Electric Van Could Be Called RCV,} , 2021.
\newblock
  \urlprefix\url{https://insideevs.com/news/524517/rivian-electric-van-named-rcv/.}

\bibitem[{Murray and Chu(2015)}]{Murray2015}
Murray, C.~C., and Chu, A.~G., \enquote{{The flying sidekick traveling salesman
  problem: Optimization of drone-assisted parcel delivery},}
  \emph{Transportation Research Part C: Emerging Technologies}, 2015.
\newblock \doi{10.1016/j.trc.2015.03.005}.

\bibitem[{Braekers et~al.(2016)Braekers, Ramaekers, and
  Van~Nieuwenhuyse}]{braekers2016vehicle}
Braekers, K., Ramaekers, K., and Van~Nieuwenhuyse, I., \enquote{The vehicle
  routing problem: State of the art classification and review,} \emph{Computers
  \& Industrial Engineering}, Vol.~99, 2016, pp. 300--313.

\bibitem[{Montoya-Torres et~al.(2015)Montoya-Torres, Franco, Isaza,
  Jim{\'e}nez, and Herazo-Padilla}]{montoya2015literature}
Montoya-Torres, J.~R., Franco, J.~L., Isaza, S.~N., Jim{\'e}nez, H.~F., and
  Herazo-Padilla, N., \enquote{A literature review on the vehicle routing
  problem with multiple depots,} \emph{Computers \& Industrial Engineering},
  Vol.~79, 2015, pp. 115--129.

\bibitem[{Erdo\u{g}an and Miller-Hooks(2012)}]{Erdogan2012}
Erdo\u{g}an, S., and Miller-Hooks, E., \enquote{{A green vehicle routing
  problem},} \emph{Transportation Research Part E: Logistics and Transportation
  Review}, 2012.
\newblock \doi{10.1016/j.tre.2011.08.001}.

\bibitem[{Asghari et~al.(2020)Asghari, Al-e et~al.}]{asghari2020green}
Asghari, M., Al-e, S. M. J.~M., et~al., \enquote{Green vehicle routing problem:
  A state-of-the-art review,} \emph{International Journal of Production
  Economics}, 2020, p. 107899.

\bibitem[{Erdeli{\'c} and Cari{\'c}(2019)}]{erdelic2019survey}
Erdeli{\'c}, T., and Cari{\'c}, T., \enquote{A survey on the electric vehicle
  routing problem: variants and solution approaches,} \emph{Journal of Advanced
  Transportation}, Vol. 2019, 2019.

\bibitem[{Gon{\c{c}}alves et~al.(2011)Gon{\c{c}}alves, Cardoso, Relvas, and
  Barbosa-P{\'o}voa}]{gonccalves2011optimization}
Gon{\c{c}}alves, F., Cardoso, S.~R., Relvas, S., and Barbosa-P{\'o}voa, A.,
  \enquote{Optimization of a distribution network using electric vehicles: A
  VRP problem,} \emph{Proceedings of the IO2011-15 Congresso da
  associa{\c{c}}{\~a}o Portuguesa de Investiga{\c{c}}{\~a}o Operacional,
  Coimbra, Portugal}, 2011, pp. 18--20.

\bibitem[{Conrad and Figliozzi(2011)}]{conrad2011recharging}
Conrad, R.~G., and Figliozzi, M.~A., \enquote{The recharging vehicle routing
  problem,} \emph{Proceedings of the 2011 industrial engineering research
  conference}, IISE Norcross, GA, 2011, p.~8.

\bibitem[{Schneider et~al.(2014)Schneider, Stenger, and
  Goeke}]{schneider2014electric}
Schneider, M., Stenger, A., and Goeke, D., \enquote{The electric
  vehicle-routing problem with time windows and recharging stations,}
  \emph{Transportation Science}, Vol.~48, No.~4, 2014, pp. 500--520.

\bibitem[{Bruglieri et~al.(2016)Bruglieri, Mancini, Pezzella, and
  Pisacane}]{bruglieri2016new}
Bruglieri, M., Mancini, S., Pezzella, F., and Pisacane, O., \enquote{A new
  mathematical programming model for the green vehicle routing problem,}
  \emph{Electronic Notes in Discrete Mathematics}, Vol.~55, 2016, pp. 89--92.

\bibitem[{Gao et~al.(2002)Gao, Liu, and Dougal}]{gao2002dynamic}
Gao, L., Liu, S., and Dougal, R.~A., \enquote{Dynamic lithium-ion battery model
  for system simulation,} \emph{IEEE transactions on components and packaging
  technologies}, Vol.~25, No.~3, 2002, pp. 495--505.

\bibitem[{Tremblay and Dessaint(2009)}]{tremblay2009experimental}
Tremblay, O., and Dessaint, L.-A., \enquote{Experimental validation of a
  battery dynamic model for EV applications,} \emph{World electric vehicle
  journal}, Vol.~3, No.~2, 2009, pp. 289--298.

\bibitem[{Montoya et~al.(2017)Montoya, Gu{\'{e}}ret, Mendoza, and
  Villegas}]{Montoya2017}
Montoya, A., Gu{\'{e}}ret, C., Mendoza, J.~E., and Villegas, J.~G.,
  \enquote{{The electric vehicle routing problem with nonlinear charging
  function},} \emph{Transportation Research Part B: Methodological}, 2017.
\newblock \doi{10.1016/j.trb.2017.02.004}.

\bibitem[{Froger et~al.(2019)Froger, Mendoza, Jabali, and
  Laporte}]{froger2019improved}
Froger, A., Mendoza, J.~E., Jabali, O., and Laporte, G., \enquote{Improved
  formulations and algorithmic components for the electric vehicle routing
  problem with nonlinear charging functions,} \emph{Computers \& Operations
  Research}, Vol. 104, 2019, pp. 256--294.

\bibitem[{Ko{\c{c}} et~al.(2019)Ko{\c{c}}, Jabali, Mendoza, and
  Laporte}]{kocc2019electric}
Ko{\c{c}}, {\c{C}}., Jabali, O., Mendoza, J.~E., and Laporte, G., \enquote{The
  electric vehicle routing problem with shared charging stations,}
  \emph{International Transactions in Operational Research}, Vol.~26, No.~4,
  2019, pp. 1211--1243.

\bibitem[{Kullman et~al.(2021)Kullman, Goodson, and
  Mendoza}]{kullman2021electric}
Kullman, N.~D., Goodson, J.~C., and Mendoza, J.~E., \enquote{Electric vehicle
  routing with public charging stations,} \emph{Transportation Science},
  Vol.~55, No.~3, 2021, pp. 637--659.

\bibitem[{Ko{\c{c}} and Karaoglan(2016)}]{kocc2016green}
Ko{\c{c}}, {\c{C}}., and Karaoglan, I., \enquote{The green vehicle routing
  problem: A heuristic based exact solution approach,} \emph{Applied Soft
  Computing}, Vol.~39, 2016, pp. 154--164.

\bibitem[{Desaulniers et~al.(2016)Desaulniers, Errico, Irnich, and
  Schneider}]{desaulniers2016exact}
Desaulniers, G., Errico, F., Irnich, S., and Schneider, M., \enquote{Exact
  algorithms for electric vehicle-routing problems with time windows,}
  \emph{Operations Research}, Vol.~64, No.~6, 2016, pp. 1388--1405.

\bibitem[{Hiermann et~al.(2016)Hiermann, Puchinger, Ropke, and
  Hartl}]{hiermann2016electric}
Hiermann, G., Puchinger, J., Ropke, S., and Hartl, R.~F., \enquote{The electric
  fleet size and mix vehicle routing problem with time windows and recharging
  stations,} \emph{European Journal of Operational Research}, Vol. 252, No.~3,
  2016, pp. 995--1018.

\bibitem[{Andelmin and Bartolini(2017)}]{andelmin2017exact}
Andelmin, J., and Bartolini, E., \enquote{An exact algorithm for the green
  vehicle routing problem,} \emph{Transportation Science}, Vol.~51, No.~4,
  2017, pp. 1288--1303.

\bibitem[{Lee(2021)}]{Lee2021}
Lee, C., \enquote{{An exact algorithm for the electric-vehicle routing problem
  with nonlinear charging time},} \emph{Journal of the Operational Research
  Society}, Vol.~72, No.~7, 2021, pp. 1461--1485.
\newblock \doi{10.1080/01605682.2020.1730250},
  \urlprefix\url{https://doi.org/10.1080/01605682.2020.1730250}.

\bibitem[{Otto et~al.(2018)Otto, Agatz, Campbell, Golden, and
  Pesch}]{otto2018optimization}
Otto, A., Agatz, N., Campbell, J., Golden, B., and Pesch, E.,
  \enquote{Optimization approaches for civil applications of unmanned aerial
  vehicles (UAVs) or aerial drones: A survey,} \emph{Networks}, Vol.~72, No.~4,
  2018, pp. 411--458.

\bibitem[{Macrina et~al.(2020)Macrina, Pugliese, Guerriero, and
  Laporte}]{macrina2020drone}
Macrina, G., Pugliese, L. D.~P., Guerriero, F., and Laporte, G.,
  \enquote{Drone-aided routing: A literature review,} \emph{Transportation
  Research Part C: Emerging Technologies}, Vol. 120, 2020, p. 102762.

\bibitem[{Dorling et~al.(2016)Dorling, Heinrichs, Messier, and
  Magierowski}]{dorling2016vehicle}
Dorling, K., Heinrichs, J., Messier, G.~G., and Magierowski, S.,
  \enquote{Vehicle routing problems for drone delivery,} \emph{IEEE
  Transactions on Systems, Man, and Cybernetics: Systems}, Vol.~47, No.~1,
  2016, pp. 70--85.

\bibitem[{Coelho et~al.(2017)Coelho, Coelho, Coelho, Ochi, Haghnazar, Zuidema,
  Lima, and da~Costa}]{coelho2017multi}
Coelho, B.~N., Coelho, V.~N., Coelho, I.~M., Ochi, L.~S., Haghnazar, R.,
  Zuidema, D., Lima, M.~S., and da~Costa, A.~R., \enquote{A multi-objective
  green UAV routing problem,} \emph{Computers \& Operations Research}, Vol.~88,
  2017, pp. 306--315.

\bibitem[{Liu(2019)}]{liu2019optimization}
Liu, Y., \enquote{An optimization-driven dynamic vehicle routing algorithm for
  on-demand meal delivery using drones,} \emph{Computers \& Operations
  Research}, Vol. 111, 2019, pp. 1--20.

\bibitem[{Troudi and Osman(2019)}]{9157260}
Troudi, Z., and Osman, L., \enquote{Analysis and Design of Band-Pass Filter
  Based on Metamaterial,} \emph{2019 IEEE 19th Mediterranean Microwave
  Symposium (MMS)}, 2019, pp. 1--4.
\newblock \doi{10.1109/MMS48040.2019.9157260}.

\bibitem[{Choudhury et~al.(2021)Choudhury, Solovey, Kochenderfer, and
  Pavone}]{choudhury2021efficient}
Choudhury, S., Solovey, K., Kochenderfer, M.~J., and Pavone, M.,
  \enquote{Efficient large-scale multi-drone delivery using transit networks,}
  \emph{Journal of Artificial Intelligence Research}, Vol.~70, 2021, pp.
  757--788.

\bibitem[{Ferrandez et~al.(2016)Ferrandez, Harbison, Weber, Sturges, and
  Rich}]{ferrandez2016optimization}
Ferrandez, S.~M., Harbison, T., Weber, T., Sturges, R., and Rich, R.,
  \enquote{Optimization of a truck-drone in tandem delivery network using
  k-means and genetic algorithm,} \emph{Journal of Industrial Engineering and
  Management (JIEM)}, Vol.~9, No.~2, 2016, pp. 374--388.

\bibitem[{Agatz et~al.(2018)Agatz, Bouman, and Schmidt}]{Agatz2018}
Agatz, N., Bouman, P., and Schmidt, M., \enquote{{Optimization approaches for
  the traveling salesman problem with drone},} \emph{Transportation Science},
  2018.
\newblock \doi{10.1287/trsc.2017.0791}.

\bibitem[{Bouman et~al.(2018)Bouman, Agatz, and Schmidt}]{bouman2018dynamic}
Bouman, P., Agatz, N., and Schmidt, M., \enquote{Dynamic programming approaches
  for the traveling salesman problem with drone,} \emph{Networks}, Vol.~72,
  No.~4, 2018, pp. 528--542.

\bibitem[{{Es Yurek} and Ozmutlu(2018)}]{EsYurek2018}
{Es Yurek}, E., and Ozmutlu, H.~C., \enquote{{A decomposition-based iterative
  optimization algorithm for traveling salesman problem with drone},}
  \emph{Transportation Research Part C: Emerging Technologies}, 2018.
\newblock \doi{10.1016/j.trc.2018.04.009}.

\bibitem[{De~Freitas and Penna(2018)}]{DeFreitas2018}
De~Freitas, J.~C., and Penna, P. H.~V., \enquote{{A randomized variable
  neighborhood descent heuristic to solve the flying sidekick traveling
  salesman problem},} \emph{Electronic Notes in Discrete Mathematics}, 2018.
\newblock \doi{10.1016/j.endm.2018.03.013}.

\bibitem[{Ha et~al.(2018)Ha, Deville, Pham, and H{\`{a}}}]{Ha2018}
Ha, Q.~M., Deville, Y., Pham, Q.~D., and H{\`{a}}, M.~H., \enquote{{On the
  min-cost traveling salesman problem with drone},} \emph{Transportation
  Research Part C: Emerging Technologies}, 2018.
\newblock \doi{10.1016/j.trc.2017.11.015}.

\bibitem[{Carlsson and Song(2018)}]{carlsson2018coordinated}
Carlsson, J.~G., and Song, S., \enquote{Coordinated logistics with a truck and
  a drone,} \emph{Management Science}, Vol.~64, No.~9, 2018, pp. 4052--4069.

\bibitem[{Salama and Srinivas(2020)}]{salama2020joint}
Salama, M., and Srinivas, S., \enquote{Joint optimization of customer location
  clustering and drone-based routing for last-mile deliveries,}
  \emph{Transportation Research Part C: Emerging Technologies}, Vol. 114, 2020,
  pp. 620--642.

\bibitem[{Raj and Murray(2020)}]{RajMurray2020mfstsp}
Raj, R., and Murray, C., \enquote{The multiple flying sidekicks traveling
  salesman problem with variable drone speeds,} \emph{Transportation Research
  Part C: Emerging Technologies}, Vol. 120, 2020, p. 102813.
\newblock \doi{https://doi.org/10.1016/j.trc.2020.102813},
  \urlprefix\url{http://www.sciencedirect.com/science/article/pii/
  S0968090X2030718X}.

\bibitem[{Wang et~al.(2017)Wang, Poikonen, and Golden}]{Wang2017}
Wang, X., Poikonen, S., and Golden, B., \enquote{{The vehicle routing problem
  with drones: several worst-case results},} \emph{Optimization Letters}, 2017.
\newblock \doi{10.1007/s11590-016-1035-3}.

\bibitem[{Schermer et~al.(2018)Schermer, Moeini, and
  Wendt}]{schermer2018algorithms}
Schermer, D., Moeini, M., and Wendt, O., \enquote{Algorithms for solving the
  vehicle routing problem with drones,} \emph{Asian Conference on Intelligent
  Information and Database Systems}, Springer, 2018, pp. 352--361.

\bibitem[{Wang and Sheu(2019)}]{wang2019vehicle}
Wang, Z., and Sheu, J.-B., \enquote{Vehicle routing problem with drones,}
  \emph{Transportation research part B: methodological}, Vol. 122, 2019, pp.
  350--364.

\bibitem[{Schermer et~al.(2019)Schermer, Moeini, and
  Wendt}]{schermer2019hybrid}
Schermer, D., Moeini, M., and Wendt, O., \enquote{A hybrid VNS/Tabu search
  algorithm for solving the vehicle routing problem with drones and en route
  operations,} \emph{Computers \& Operations Research}, Vol. 109, 2019, pp.
  134--158.

\bibitem[{Sacramento et~al.(2019)Sacramento, Pisinger, and
  Ropke}]{sacramento2019adaptive}
Sacramento, D., Pisinger, D., and Ropke, S., \enquote{An adaptive large
  neighborhood search metaheuristic for the vehicle routing problem with
  drones,} \emph{Transportation Research Part C: Emerging Technologies}, Vol.
  102, 2019, pp. 289--315.

\bibitem[{Kitjacharoenchai et~al.(2020)Kitjacharoenchai, Min, and
  Lee}]{kitjacharoenchai2020two}
Kitjacharoenchai, P., Min, B.-C., and Lee, S., \enquote{Two echelon vehicle
  routing problem with drones in last mile delivery,} \emph{International
  Journal of Production Economics}, Vol. 225, 2020, p. 107598.

\bibitem[{Liu et~al.(2020)Liu, Liu, Shi, Wu, and Pedrycz}]{liu2020two}
Liu, Y., Liu, Z., Shi, J., Wu, G., and Pedrycz, W., \enquote{Two-echelon
  routing problem for parcel delivery by cooperated truck and drone,}
  \emph{IEEE Transactions on Systems, Man, and Cybernetics: Systems}, 2020.

\bibitem[{{Electric Vehicle Database}(2022)}]{EVD}
{Electric Vehicle Database}, \enquote{Energy consumption of full electric
  vehicles,} , 2022.
\newblock
  \urlprefix\url{https://ev-database.org/cheatsheet/energy-consumption-electric-car}.

\bibitem[{{Alke}(2022)}]{AlkeVan}
{Alke}, \enquote{Electric Van Alke,} , 2022.
\newblock \urlprefix\url{https://www.alke.com/electric-van}.

\bibitem[{{DJI}(2022)}]{DJI}
{DJI}, \enquote{DJI MATRICE 600 PRO,} , 2022.
\newblock \urlprefix\url{https://www.dji.com/matrice600-pro/info}.

\bibitem[{Sarker et~al.(2014)Sarker, Pand{\v{z}}i{\'c}, and
  Ortega-Vazquez}]{sarker2014optimal}
Sarker, M.~R., Pand{\v{z}}i{\'c}, H., and Ortega-Vazquez, M.~A.,
  \enquote{Optimal operation and services scheduling for an electric vehicle
  battery swapping station,} \emph{IEEE transactions on power systems},
  Vol.~30, No.~2, 2014, pp. 901--910.

\bibitem[{Cavani et~al.(2021)Cavani, Iori, and Roberti}]{Cavani2021}
Cavani, S., Iori, M., and Roberti, R., \enquote{{Exact methods for the
  traveling salesman problem with multiple drones},} \emph{Transportation
  Research Part C: Emerging Technologies}, Vol. 130, No. September, 2021.
\newblock \doi{10.1016/j.trc.2021.103280}.

\bibitem[{Roberti and Ruthmair(2021)}]{roberti2021exact}
Roberti, R., and Ruthmair, M., \enquote{Exact methods for the traveling
  salesman problem with drone,} \emph{Transportation Science}, Vol.~55, No.~2,
  2021, pp. 315--335.

\bibitem[{Baldacci et~al.(2011)Baldacci, Mingozzi, and
  Roberti}]{baldacci2011new}
Baldacci, R., Mingozzi, A., and Roberti, R., \enquote{New route relaxation and
  pricing strategies for the vehicle routing problem,} \emph{Operations
  research}, Vol.~59, No.~5, 2011, pp. 1269--1283.

\bibitem[{Campuzano et~al.(2021)Campuzano, Lalla-Ruiz, and
  Mes}]{campuzano2021multi}
Campuzano, G., Lalla-Ruiz, E., and Mes, M., \enquote{A Multi-start VNS
  Algorithm for the TSP-D with Energy Constraints,} \emph{International
  Conference on Computational Logistics}, Springer, 2021, pp. 393--409.

\bibitem[{Balinski and Quandt(1964)}]{balinski1964integer}
Balinski, M.~L., and Quandt, R.~E., \enquote{On an integer program for a
  delivery problem,} \emph{Operations research}, Vol.~12, No.~2, 1964, pp.
  300--304.

\bibitem[{Mladenovi{\'c} and Hansen(1997)}]{mladenovic1997variable}
Mladenovi{\'c}, N., and Hansen, P., \enquote{Variable neighborhood search,}
  \emph{Computers \& operations research}, Vol.~24, No.~11, 1997, pp.
  1097--1100.

\bibitem[{Hansen and Mladenovi{\'c}(2001)}]{hansen2001variable}
Hansen, P., and Mladenovi{\'c}, N., \enquote{Variable neighborhood search:
  Principles and applications,} \emph{European journal of operational
  research}, Vol. 130, No.~3, 2001, pp. 449--467.

\bibitem[{Felipe et~al.(2014)Felipe, Ortu{\~n}o, Righini, and
  Tirado}]{felipe2014heuristic}
Felipe, {\'A}., Ortu{\~n}o, M.~T., Righini, G., and Tirado, G., \enquote{A
  heuristic approach for the green vehicle routing problem with multiple
  technologies and partial recharges,} \emph{Transportation Research Part E:
  Logistics and Transportation Review}, Vol.~71, 2014, pp. 111--128.

\bibitem[{De~Freitas and Penna(2020)}]{de2020variable}
De~Freitas, J.~C., and Penna, P. H.~V., \enquote{A variable neighborhood search
  for flying sidekick traveling salesman problem,} \emph{International
  Transactions in Operational Research}, Vol.~27, No.~1, 2020, pp. 267--290.

\bibitem[{Watkins et~al.(2020)Watkins, Burry, Mohamed, Marino, Prudden, Fisher,
  Kloet, Jakobi, and Clothier}]{watkins2020ten}
Watkins, S., Burry, J., Mohamed, A., Marino, M., Prudden, S., Fisher, A.,
  Kloet, N., Jakobi, T., and Clothier, R., \enquote{Ten questions concerning
  the use of drones in urban environments,} \emph{Building and Environment},
  Vol. 167, 2020, p. 106458.

\end{thebibliography}
\end{document}